\newtheorem{theorem}{Theorem}[section]
\newtheorem{lemma}[theorem]{Lemma}
\newtheorem{conjecture}[theorem]{Conjecture}
\newtheorem*{lemma*}{Lemma}
\newtheorem*{remark*}{Remark}
\theoremstyle{definition}
\newtheorem{definition}[theorem]{Definition}
\newcommand{\range}[2]{\in\{#1,\ldots,n\}}
\newcommand{\vrt}{\textsc{Vert}}
\newcommand{\ideg}{\operatorname{ideg}} 
\newcommand{\bdeg}{\operatorname{bdeg}} 
\def\FULLVERSION{}
\def\FULLVERSIONPERMTABLE{}
\begin{document}

\title{Fairly Dividing a Cake after Some Parts Were Burnt in the Oven
\footnote{
A preliminary version of this paper was accepted to the AAMAS-2018 conference
\citep{SegalHalevi2018Burnt}.
The main addition in the present version is Section \ref{sec:special-case}, which proves the existence of envy-free divisions in two additional cases.
}
}

\author{Erel Segal-Halevi
\\
Ariel University, Ariel 40700, Israel
}

\maketitle

\begin{abstract}
There is a heterogeneous resource that contains both good parts and bad parts, for example, a cake with some parts burnt, a land-estate with some parts heavily taxed, or a chore with some parts fun to do. The resource has to be divided fairly among $n$ agents
with different preferences, each of whom has a personal value-density function on the resource. The value-density functions can accept any real value --- positive, negative or zero. 
Each agent should receive a connected piece and no agent should envy another agent. We prove that such a division exists for 3 agents and present preliminary positive results for larger numbers of agents.
\end{abstract}

\maketitle

\section{Introduction}
Most research works on fair division assume that the \emph{manna} (the resource to divide) is good, e.g., tasty cakes, precious jewels or fertile land-estates. A substantial minority of the works assume that the manna is bad, e.g., house-chores or night-shifts. Recently, \citet{Bogomolnaia2017Competitive} introduced the more general setting of \emph{mixed manna} --- every resource can be good for some agents and bad for others. Here are some illustrative examples. 
\begin{enumerate}
\item A cake with some parts burnt has to be divided among children. Some (like this author as a child) find the burnt parts tasty, but most children consider them bad (but still must eat what they get in order not to insult the host).
\item A land-estate has to be divided among heirs, where landowners are subject to taxation. The value of a land-plot to an heir may be either positive or negative, depending on his/her valuation of the land and tax status.
\item A house-chore such as washing the dishes has to be divided among family members. Most of them consider this bad, but some of them may view dish-washing, in some parts of the day, as a perfect relaxation after spending hours in solving mathematical problems.
\end{enumerate}
While \citet{Bogomolnaia2017Competitive} focused on dividing homogeneous resources, 
we study the  classic problem of \emph{cake-cutting} \citep{Steinhaus1948Problem} --- dividing a single heterogeneous resource.
The cake-cutting problem comes in many flavors: the cake can be one-dimensional or multi-dimensional
\citep{SegalHalevi2017Fair}; the fairness criterion can be \emph{proportionality} (each agent receives a piece he values as at least $1/n$ of the total) or \emph{envy-freeness} (each agent receives a piece he values at least as much as the piece of any other agent); the pieces can be connected or disconnected; and more. See \cite{Branzei2015Computational,Procaccia2015Cake} for recent surveys. 
All variants were studied in the good-cake setting (all agents consider every piece of cake good). Some variants were also studied in the bad-cake setting (all agents consider every piece of cake bad). So far, no variants were studied in the general mixed-cake setting. 

While all variants of the cake-cutting problem are interesting, this paper focuses on a specific variant in which (a) the cake is one-dimensional, (b) the fairness criterion is envy-freeness, (c) the pieces must be connected (see \textbf{Section \ref{sec:model}} for the formal model).

The main question of interest in this paper is:
\begin{center}
\emph{Does there exist a connected envy-free division of a mixed cake?}
\end{center}
It is known that the answer is ``yes'' both for good cakes and for bad cakes \citep{Su1999Rental}. Moreover, there are procedures for approximating such a division for any number of agents.
However, the proofs are based on a specific combinatorial structure, based on the well-known \emph{Sperner's lemma}; this structure breaks down in the mixed-cake setting, so the existing proofs are inapplicable (\textbf{Section \ref{sec:existing}}).

Working with mixed cakes requires a new, more general combinatorial structure. This structure is based on a generalization of Sperner's lemma. Based on this structure, it is possible to prove the main result  (\textbf{Section \ref{sec:mixed}}):
\begin{center}
\emph{A connected envy-free division always exists for three agents.}%
\footnote{
Division problems with 3 agents are quite common in practice. For example, according to www.pewsocialtrends.org/2015/05/07/family-size-among-mothers, 
about 25\% of mothers have 3 children. Hence, about 25\% of inheritance cases
involve division among 3 agents.
As another example, in the spliddit.org website \citep{Goldman2015Spliddit},
about 62\% of all requests for fair division of items involve 3 agents. We thank Nisarg Shah for this information.
}
\end{center}
The existence of a connected envy-free division implies that an existing approximation algorithm can be adapted to approximate such a division to any desired accuracy (\textbf{Section \ref{sec:finding}}).

Most parts of the proof are valid for any number of agents. However, there is one part which we do not know how to generalize to an arbitrary number of agents.
Recently, \citet{meunier2018envy}
presented a proof that an envy-free division exists when the number of agents is 4 or prime.
A proof sketch for the case of prime $n$, using more elementary arguments, is given in \textbf{Section \ref{sec:special-case}}.

\section{Model}
\label{sec:model}
A cake --- modeled as the interval $[0,1]$ --- has to be divided among $n$ agents. The agents are called $A_1,\ldots,A_n$ or Alice, Bob, Carl, etc. The cake should be partitioned into $n$ pairwise-disjoint intervals, $X_1,\ldots,X_n$ (some possibly empty), whose union equals the entire cake. Interval $X_i$ should be given to $A_i$ such that the division is \emph{envy-free} --- each agent weakly prefers his piece over any other agent's piece. Two models for the agents' preferences are considered. 

\paragraph{(A) Additive agents:} each agent $A_i$ has an integrable value-density function $v_i$. The value of a piece is the integral of the value-density on that piece: $V_i(X_j)=\int_{x\in X_j} v_i(x)dx$. 
Note, the value of any single point is $0$, so it is irrelevant who receives the endpoints of pieces.
A division is \emph{envy-free} if each agent believes his piece's value is at least as high as the value of any other agent's piece: $\forall i,j: V_i(X_i)\geq V_i(X_j)$. 

\paragraph{(B) Selective agents:} each agent $A_i$ has a function $s_i$ that accepts a nonempty set of pieces $X$ and returns a nonempty subset of $X$. The interpretation is that the agent ``prefers'' each of the pieces in $s_i(X)$ over all other pieces in $X$ (this implies that the agent is indifferent between the pieces in $s_i(X)$).  A division $X$ is \emph{envy-free} if each agent receives one of his preferred pieces: $\forall i: X_i\in s_i(X)$.
The preference functions should be \emph{continuous} --- any piece that is preferred for a convergent sequence of partitions is preferred for the limit partition (equivalently: for each $i,j$, the set of partitions $X$ in which $X_j\in s_i(X)$ is a closed set. See \citet{Su1999Rental}). This, again, implies that it is irrelevant who receives the endpoints of pieces.

Model (A) is more common in the cake-cutting world, while model (B) is much more general. Every additive agent is also a selective agent%
\ifdefined\FULLVERSION
, with $s_i(X):=\arg\max_{j\range{1}{n}}(V_i(X_j))$
(in each partition, the agent selects the piece or pieces with maximal value). 
\else
.
\fi
But selective agents may have non-additive valuations and even some externalities: the preference of an agent may depend on the entire set of pieces in the partition rather than just his own piece (however, the preference may not depend on which agent receives what piece; see \cite{Branzei2013Externalities} for a discussion of such externalities). 
\hskip 1cm
In the good-cake and bad-cake settings, additional assumptions are made on the agents' preferences besides continuity, as shown in Table \ref{tab:asm}. 
The present paper removes these assumptions.

\begin{table}
\small
\begin{center}
\begin{tabular}{|c|c|c|}
\hline
\textbf{Cake}
          & \textbf{Additive agents} & \textbf{Selective agents} \\ 
\hline 
{Good}
& 
$v_i(x)\geq 0 $ for every $x\in[0,1]$.
& 
\shortstack{$s_i$ always contains a\\non-empty piece.}
\\ 
\hline 
{Bad}
&
$v_i(x)\leq 0$ for every $x\in [0,1]$.
&
\shortstack{
$s_i$ always contains an 
\\empty piece, if one exists.
}
\\
\hline 
{Mixed}
&  
$v_i$ is any integrable function.
&
\shortstack{
$s_i$ is any continuous 
\\
selection function.
}
\\ 
\hline 
\end{tabular} 
\end{center}
\caption{\label{tab:asm} Assumptions in different cake-cutting models.}
\end{table}

\paragraph{Approximately-envy-free division.}
There are two ways to define an approximately envy-free division. 
(A) With additive agents, the approximation is measured in units of value: an \emph{$\epsilon$-envy-free division} is a division in which each agent believes that his piece's value is at most $\epsilon$ less than the value of any other piece: $\forall i,j: V_i(X_i)\geq V_i(X_j)-\epsilon$.  The valuations are  usually normalized such that the value of the entire cake is $1$ for all agents, so $\epsilon$ is a fraction (e.g., $1\%$ of the cake value).
\hskip 1cm
(B) With selective agents there are no numeric values, so the approximation is measured in  units of length: a \emph{$\delta$-envy-free division} is a division in which, for every
agent $A_i$, movement of the borders by at most $\delta$ results in a
division in which $A_i$ prefers his piece over any other piece. 
\ifdefined\FULLVERSION
If $\delta$ is sufficiently small
(e.g. 0.01 millimeter) 
then an $\delta$-envy-free division 
can be considered envy-free
for all practical purposes.
\fi

Unless stated otherwise, all results in this paper are valid for selective agents, therefore also for additive agents.

\section{Existing Procedures}
\label{sec:existing}
With $n=2$ agents, the classic ``I cut, you choose'' protocol produces an envy-free division whether the cake is good, bad or mixed. The fun begins at $n=3$. 
\subsection{Reduction to all-goods and all-bads} 
One might think that mixed-manna problems  could be reduced to good-manna and bad-manna ones in the following way. For each part of the resource: (a) if there is one or more agents who think it is good, then divide it among them using any known procedure for dividing goods; (b) otherwise, all agents think it is bad --- divide it among them using any known procedure for dividing bads. 

However, this simple reduction does not work when there are additional requirements besides fairness, such as economic efficiency or connectivity. 
In 
\citet{Bogomolnaia2017Competitive} the requirements are envy-freeness and Pareto-efficiency; in this paper the requirements are envy-freeness and connectivity. 
It is impossible to guarantee all three properties simultaneously \citep{Stromquist2007Pie}.
Hence the techniques and results are quite different, and no one implies the other.

\subsection{Moving-knives and approximations}
Three procedures for connected envy-free division for three \emph{additive} agents are known: \citet{Stromquist1980How}, \citet[pages 77-78]{Robertson1998CakeCutting} and \citet{Barbanel2004Cake}. All of them use one or more knives moving continuously.
They were originally designed for good cakes and later adapted to bad cakes. 
All of them crucially rely on a \emph{monotonicity} assumption: all agents weakly prefer a piece to all its subsets (in a good cake), or all agents weakly prefer a piece to all its supersets (in a bad cake). However, monotonicity does not hold with a mixed cake, so these procedures cannot be used.  
\ifdefined\FULLVERSION
See Appendix \ref{app:connected} for details and specific negative examples.
\fi

Another algorithm that does not work, but for a different reason, is the generic approximation algorithm recently presented by \citet{Branzei2017Query} for additive agents.
Their algorithm can approximate any division that is described by linear conditions; in particular, it can approximate an envy-free division, whenever such a division exists. Since an envy-free division of a mixed cake among three agents always exists (as will be proved in this paper), their algorithm can be used to find an approximation of it. The problem is the runtime complexity: while with good cakes and bad cakes their algorithm runs in time $O(n/\epsilon)$ when $\epsilon$ is the additive approximation factor, with mixed cakes the runtime complexity might be unbounded.
\ifdefined\FULLVERSION
See Appendix \ref{app:connected} for details. 
\else
Details are in an appendix in the full version. 
\fi

\subsection{Simplex of partitions}
With four or more agents, or even with three \emph{selective} agents, no moving-knives procedures are known. 
A different approach, which works for any number of selective agents, was suggested by \citet{Stromquist1980How} and further developed by \citet{Su1999Rental}. It is based on the \emph{simplex of partitions}. To present it we introduce some notation that will also be used in the rest of the paper.

$\Delta^{n-1}$ is the $(n-1)$-dimensional standard simplex --- the points $(l_1,\ldots,l_n)$ with $l_1+\cdots+l_n = 1$. Each such point  represents a cake-partition where the piece lengths are $l_1,\ldots,l_n$; see Figure \ref{fig:simplex-of-partitions}.

$[n]$ denotes the set $\{1,\ldots,n\}$.
The $n$ vertices of $\Delta^{n-1}$ are called its \emph{main vertices} and denoted by $F_j$, for $j\in [n]$. 
Each face of $\Delta^{n-1}$ is the convex hull of some 
subset of its main vertices, 
$\operatorname{conv}_{j\in J}(F_j)$ for some $J\subseteq [n]$; this face is denoted by $F_J$. E.g, the face connecting $F_1$ and $F_2$ is denoted $F_{\{1,2\}}$, or just $F_{12}$ for short.
For each $j\in[n]$, we denote $F_{-j} := F_{[n]\setminus \{j\}} = $ the face opposite to $F_j$. In all points on $F_{-j}$, the $j$-th coordinate is 0, so they represent partitions in which piece number $j$ is empty.

\begin{figure}
	\begin{center}
		\includegraphics[width=.28\columnwidth]{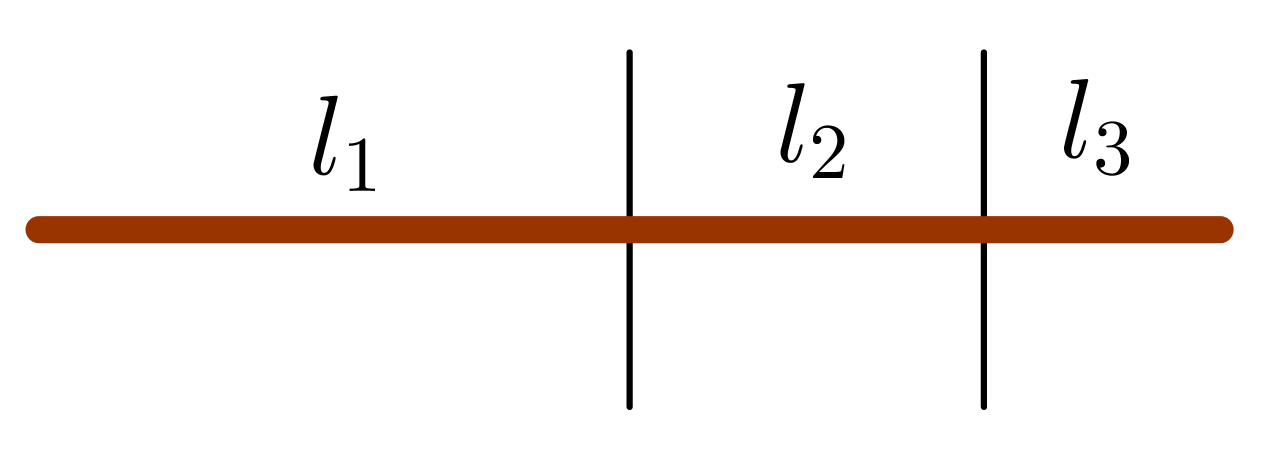}
		\hskip .04\columnwidth
		\includegraphics[width=.62\columnwidth]{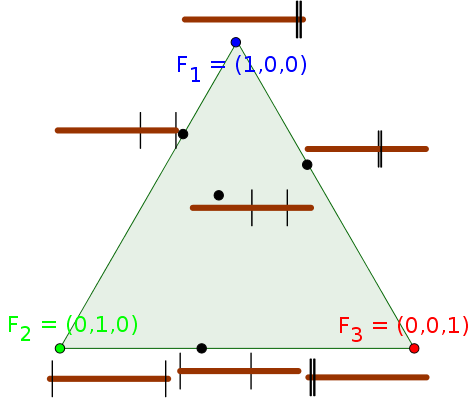}
	\end{center}
	\caption{\label{fig:simplex-of-partitions}
		\textbf{Left:} a generic partition of the cake among $n=3$ agents. $l_1+l_2+l_3=1$.
		\hskip 1cm
		\textbf{Right:} The simplex of partitions for $n=3$ agents. Each point represents a partition. Seven points are marked, and the corresponding partitions are shown.}
\end{figure}

\paragraph{Agent labelings.}
Given a partition of the cake into $n$ intervals, each agent has one or more \emph{preferred pieces}.
The preferences of agent $A_i$ can be represented by a function $L_i: \Delta^{n-1} \to 2^{[n]}$. The function $L_i$ maps each cake-partition (= a point in the standard simplex) to the set of pieces that $A_i$ prefers in this partition (= a set of labels from $[n]$). The set of preferred pieces always contains at least one label; it may contain more than one label if the agent is indifferent between two or more best pieces. This is particularly relevant in case the agent prefers an empty piece, since there are partitions in which there is more than one empty piece. If $x$ is such a partition then $L_i(x)$ contains the set of all empty pieces. See Figure \ref{fig:single-agent}.
An \emph{envy-free division} corresponds to a point $x$ in the partition-simplex where it is possible to select, for each $i$, a single label from $L_i(x)$, such that the $n$ labels are distinct.

\begin{figure}
\begin{center}
\includegraphics[width=.44\columnwidth]{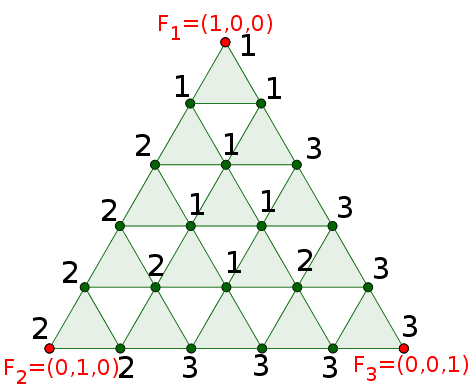}
\hskip .04\columnwidth
\includegraphics[width=.44\columnwidth]{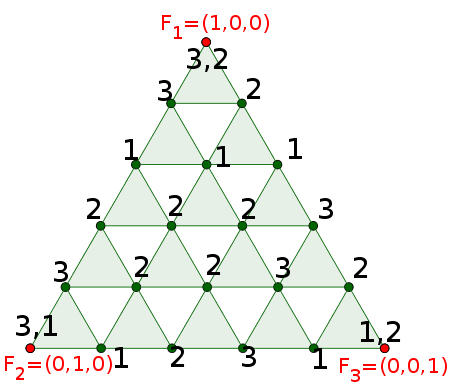}
\end{center}
\caption{\label{fig:single-agent}
Possible labeling $L_i$ of a single agent.
\textbf{Left:} the value of the entire cake is positive. Hence, in each main vertex $F_j$, the agent prefers only piece $j$, since it is the only non-empty piece. In the edges between two main vertices $F_j,F_k$, the agent prefers either $j$ or $k$.
\\
\textbf{Right:} the value of the entire cake is negative, but it contains some positive parts. In each main vertex $F_j$, the agent prefers the two empty pieces --- the two pieces that are NOT $j$. In the edges between two main vertices, all three labels may appear.
}
\end{figure}

\paragraph{Triangulations.} A triangulation of a simplex is a partition into sub-simplices satisfying some technical conditions.%
\ifdefined\FULLVERSION
\footnote{
Formally, a triangulation is determined by naming its vertices and those sets of vertices which span simplices of the triangulation, subject to the requirements that:
(i) the intersection of each pair of simplices be either empty or a simplex of the triangulation,
(ii) each face of a simplex of the triangulation is also a simplex of the triangulation,
(iii) the original simplex is the union of the simplices of the triangulation.
See \citet{Munkres1996Elements} for a formal definition and \citet{Su1999Rental} for an informal presentation.
}
\fi
~An example is shown in Figure \ref{fig:single-agent}.
We denote a triangulation by $T$, and the set of vertices in the triangulation by $\vrt(T)$. 

\begin{definition}[\textbf{Envy-free simplex}]
\label{def:ef-simplex}
Suppose we let all $n$ agents label the vertices of $T$, so we have $n$ labelings $L_i: \vrt(T)\to 2^{[n]}$
for $i\range{1}{n}$. An \emph{envy-free simplex} is a sub-simplex in $T$ with vertices $(t_1,\ldots,t_n)$, such that, for each $i\in [n]$, it is possible to select a single label from $L_i(t_i)$ such that the $n$ labels are distinct.
\end{definition}

If the diameter of each sub-simplex in $T$ is at most $\delta$, then each envy-free simplex corresponds to a $\delta$-envy-free division.
If, for every $\delta$, there is an envy-free simplex with diameter at most $\delta$, then the continuity of the preference functions $s_i$ implies the existence of an envy-free division; see \citet{Su1999Rental}.

\paragraph{Good Cakes.}
In a partition of a good cake,
there always exists a non-empty piece with a weakly-positive value, so it is always possible to assume that each agent prefers a non-empty piece. Therefore, every labeling $L_i$ satisfies \emph{Sperner's boundary condition}: every triangulation-vertex in the face $F_J$ is labeled with a label from the set $J$ (see Figure \ref{fig:single-agent}/Left).
\ifdefined\FULLVERSION
 Succinctly:
\begin{align*}
\forall i\in [n]:
\forall J\subseteq [n]:
\forall x\in F_J: L_i(x)\cap  J \neq \emptyset
\end{align*}
\fi
By Sperner's lemma, for every $i$ there is a \emph{fully-labeled simplex} --- a simplex whose $n$ vertices are labeled by $L_i$ with $n$ distinct labels.
\ifdefined\FULLVERSION
\begin{lemma}[Sperner's lemma]
	Let $T$ be a triangulation of $\Delta^{n-1}$. Let $L: \vrt(T)\to 2^{[n]}$ be a labeling. If $L$ satisfies Sperner's boundary condition, then it has an odd number of fully-labeled simplices. 
\end{lemma}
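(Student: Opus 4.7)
The plan is to reduce the multi-valued statement to the classical (single-valued) Sperner's lemma via a consistent selection, then prove the classical version by the standard door-counting parity argument.

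For the reduction, for each $v\in\vrt(T)$ let $J_v\subseteq[n]$ be the smallest index set with $v\in F_{J_v}$ (the support of the barycentric coordinates of $v$). Since $v\in F_{J_v}$, the boundary condition gives $L(v)\cap J_v\neq\emptyset$, so I fix a selection $\ell(v)\in L(v)\cap J_v$. By minimality of $J_v$, whenever $v\in F_J$ one has $J_v\subseteq J$, hence $\ell(v)\in J$; thus $\ell:\vrt(T)\to[n]$ is a single-valued Sperner labeling. Any sub-simplex that is fully-labeled under $\ell$ is automatically fully-labeled under $L$ in the sense of the statement, so it suffices to produce an odd number of fully-labeled sub-simplices for $\ell$.

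For the classical version I would induct on $n$; the case $n=1$ is immediate (the unique vertex must be labeled $1$). For the step, call a face of a sub-simplex of $T$ an \emph{$n$-door} if the multiset of $\ell$-labels on its vertices is exactly $\{1,\ldots,n-1\}$. Each full-dimensional sub-simplex contains $0$, $1$, or $2$ $n$-doors: exactly $1$ iff it is fully-labeled, exactly $2$ iff its label multiset equals $\{1,\ldots,n-1\}$ with one label repeated, and $0$ otherwise. Dually, each interior $n$-door is a face of exactly two sub-simplices and each boundary $n$-door of exactly one; moreover the boundary condition forces every boundary $n$-door to lie in $F_{-n}=F_{[n-1]}$, because only on this face can the labels $1,\ldots,n-1$ all appear. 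Counting pairs (sub-simplex, $n$-door) modulo $2$ yields
\[
\#\{\text{fully-labeled sub-simplices}\}\;\equiv\;\#\{n\text{-doors on }F_{-n}\}\pmod 2.
\]
The restriction of $\ell$ to the triangulation induced on $F_{-n}\cong\Delta^{n-2}$ satisfies the boundary condition in dimension $n-2$, and its fully-labeled sub-simplices are precisely the $n$-doors on $F_{-n}$; the inductive hypothesis then makes the right-hand count odd.

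The delicate point is picking $\ell(v)\in L(v)\cap J_v$ for the \emph{minimal} face $J_v$ containing $v$: any coarser choice from $L(v)\cap J$ with $J\supsetneq J_v$ would fail to respect the single-valued boundary condition on $J_v$ itself, and the reduction would collapse. The rest is the routine parity bookkeeping.
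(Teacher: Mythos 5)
Your proof is correct. Note, however, that the paper does not actually prove this lemma: it is stated as the classical Sperner's lemma, cited as known, and used as a black box. Later (in the section on special cases), the paper observes that Sperner's lemma follows from its own Degree Lemma (boundary degree equals interior degree), since a labeling satisfying Sperner's boundary condition has boundary degree exactly $1$, forcing an odd number of fully-labeled sub-simplices. You instead give a direct, elementary argument: a clean reduction from the set-valued labeling $L$ to a single-valued Sperner labeling $\ell$ by selecting within the minimal carrier face $F_{J_v}$ of each vertex (this is the right way to do it --- selecting from a non-minimal face would break the condition on smaller faces, as you observe), followed by the classical door-counting parity induction. The two routes buy different things: the door-counting proof is self-contained and requires no machinery beyond parity, which is appropriate if one wants this lemma to stand independently; the paper's degree-theoretic framework is heavier but is exactly what is needed for the mixed-cake setting, where Sperner's boundary condition fails and one must track signed counts rather than just parity. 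Your proof is sound, but it establishes only the good-cake case that the paper already takes for granted; it does not replace the Degree Lemma.
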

\fi

In order to get an \emph{envy-free simplex}, we combine the $n$ agent-labelings $L_1,\ldots,L_n$ to a single labeling $L^W:\vrt(T)\to 2^{[n]}$ in the following way.
Each triangulation-vertex is assigned to one of the $n$ agents, such that in each sub-simplex, each of its vertices is owned by a unique agent. See Figure \ref{fig:triangulation-ownership}/Left. Now, each vertex is labeled with the corresponding label-set of its owner: if a vertex $x$ is owned by agent $A_i$, then $L^W(x) := L_i(x)$. See Figure \ref{fig:triangulation-ownership}/Right. If all the $L_i$ satisfy Sperner's boundary condition, then the combined labeling $L^W$ also satisfies Sperner's boundary condition. Therefore, by Sperner's lemma, $L^W$ has a fully-labeled simplex. By definition of $L^W$, this simplex is an envy-free simplex \citep{Su1999Rental}.
\begin{figure}
	\begin{center}
		\includegraphics[width=.44\columnwidth]{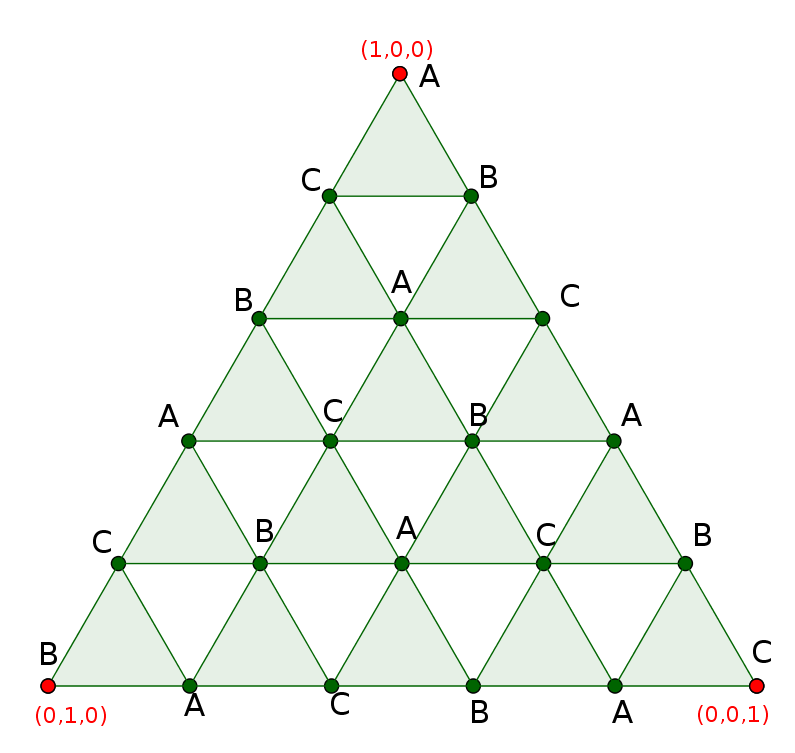}
		\hskip .05\columnwidth
		\includegraphics[width=.44\columnwidth]{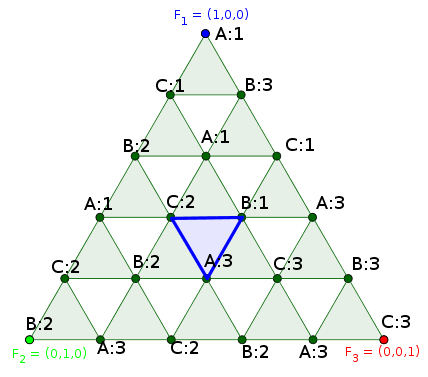}
	\end{center}
	\caption{\label{fig:triangulation-ownership} \textbf{Left}: Assignment of vertices to agents such that, in each sub-triangle, each vertex is owned by a different agent.
		\\
	\textbf{Right}: A combined labeling based on this ownership-assignment. The emphasized triangle at the center is an envy-free simplex.
}
\end{figure}

\paragraph{Bad Cakes}
In a partition of a bad cake, the values of all non-empty pieces are weakly negative, so it is always possible to assume that each agent prefers an empty piece. 
In the main vertices, there are $n-1$ empty pieces; the agent is indifferent between them, so we may label each main vertex with an arbitrary empty piece. We can always do this such that the resulting labeling satisfies Sperner's boundary condition \citep{Su1999Rental}.
\ifdefined\FULLVERSION
For example, if we label each main vertex $F_j$ by $j+1$ (modulo $n$) then the labeling satisfies Sperner's condition. 
\fi
Hence, an envy-free simplex exists.

\paragraph{Mixed Cakes}
When the value of the entire cake is negative, but the cake may contain positive pieces,
each agent may prefer in each point either an empty piece or a non-empty piece. Hence, the agent labelings no longer satisfy Sperner's boundary condition; see Figure \ref{fig:single-agent}/Right. Here, our work begins.

\begin{figure}
	\begin{center}
		\hskip 1cm
		\includegraphics[width=.8\columnwidth]{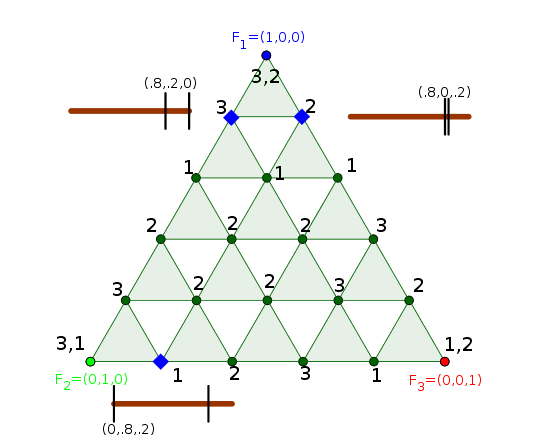}
	\end{center}
	\caption{\label{fig:single-agent-negative}
		Three points representing the same physical partition.
		}
\end{figure}

\section{Cutting Mixed Cakes}
\label{sec:mixed}
\subsection{The Consistency Condition}
\label{sub:permutation}
The first step in handling a mixed cake is to find boundary conditions that are satisfied for \emph{all} agent labelings,  regardless of whether the cake is good, bad or mixed. 
Our boundary condition is based on the observation
that \emph{different points on the boundary of the partition-simplex may represent the same physical cake-partition}. For example, consider the three diamond-shaped points in Figure \ref{fig:single-agent-negative}. In each of these points, the set of pieces is the same: $\{[0,.8],[.8,1],\emptyset\}$. 
Therefore, a consistent agent will select the same piece in all three partitions, even though this piece might have a different index in each point. This means that the agent's label in each of these points uniquely determines the agent's labels in the other two points. For example, if the agent labels the top-left diamond point by ``3'', this means that he prefers the empty piece, so he must label the top-right diamond point by ``2'' and the bottom-left diamond point by ``1'' (as in the figure).

To formalize this boundary condition we need several definitions.

\newcommand{\perm}[1]{\pi_{-#1}}
\newcommand{\friend}[1]{f_{#1}}

\begin{definition}
\label{def:friends}
Two points in $\Delta^{n-1}$ are called \emph{friends} if they have the same ordered sequence of nonzero coordinates.
\end{definition}
For example, on $\Delta^{3-1}$, the points $(0,.2,.8)\in F_{-1}$ and $(.2,0,.8)\in F_{-2}$ and $(.2,.8,0)\in F_{-3}$ are friends, since their ordered sequence of nonzero coordinates is $(.2,.8)$. 
\ifdefined\FULLVERSION
But the point $(0,.8,.2)$ is not their friend since its ordered sequence of nonzero coordinates is $(.8,.2)$.
Note that if $x$ is in the interior of $\Delta^{n-1}$, then all its coordinates are nonzero, so it has no friends except itself. 
\fi

Since our boundary conditions have a bite only for friends, we will consider from now on only triangulations that are ``friendly'':
\begin{definition}
A triangulation $T$ is called \emph{friendly} if, for every vertex $x\in \vrt(T)$, all the friends of $x$ are in $\vrt(T)$.
\end{definition}

Our boundary condition is that the label of a vertex in $F_{-1}$ uniquely determines the labels of all its friends on the other faces. Specifically, 
consider a vertex $x_k\in F_{-k}$.
By definition of $F_{-k}$, the $k$-th coordinate of $x_k$ iz zero. 
If we move the $k$-th coordinate of $x_k$ to position 1 and push coordinates $1,\ldots,k-1$ one position rightwards, we get a vertex on $F_{-1}$ that is a friend of $x_k$; denote it by $\friend{k}(x_k)$. Since the triangulation is friendly, it contains $\friend{k}(x_k)$.

Suppose that the label of $\friend{k}(x_k)$ is $l$. Then the label on $x_k$ is:
\begin{align}
\label{eq:perm}
\perm{k}(l) := 
\begin{cases} 
	k&      l=1 \text{~~~~~~[agent prefers  empty piece]}\\
	l-1& 	1< l \leq k \\
	l & 	l>k 
\end{cases}
\end{align}
For every $k$, the function $\perm{k}$ is a permutation (a bijection from $[n]$ to $[n]$). $\perm{1}$ is the identity permutation. 
\ifdefined\FULLVERSIONPERMTABLE
Table \ref{tab:perm} shows the three permutations for $n=3$: $\perm{1}$, $\perm{2}$, $\perm{3}$.
\begin{table}
\begin{center}
\begin{tabular}{|c|c|c|c|c|}
	\hline 
	Preferred piece: &  Empty & Left & Right &\{ER\}ELRE\{EL\} \\ 
	\hline 
	Label on $F_{-1}$:& 1 & 2 & 3 & \{13\}1231\{12\} \\ 
	\hline 
	Label on $F_{-2}$:& 2 & 1 & 3 & \{23\}2132\{21\}\\ 
	\hline 
	Label on $F_{-3}$: & 3 & 1 & 2 & \{32\}3123\{31\} \\ 
	\hline 
\end{tabular}
\end{center}
\caption{\label{tab:perm} Label-permutations 
that satisfy Definition \ref{def:permutation} for $n=3$.
The rightmost column is provided as an example. It corresponds to the labeling in each edge in Figure \ref{fig:single-agent-negative}. Note that the labeling always goes from the vertex with the lower index (the Left vertex) to the vertex with the higher index (the Right vertex). E means that the agent prefers the Empty piece, R means the Right piece and L means the Left piece. Braces imply that there are multiple labels on the same point.
}
\end{table}
\else
The table below shows the three permutations for $n=3$: $\perm{1}$, $\perm{2}$ and $\perm{3}$. The rightmost column is an illustration corresponding to the sequence of labels on each face in Figure \ref{fig:single-agent-negative}:
\\
\begin{center}
\begin{tabular}{|c|c|c|c|c|}
	\hline 
	Preferred piece: &  Empty & Left & Right &\{ER\}ELRE\{EL\} \\ 
	\hline 
	Label on $F_{-1}$:& 1 & 2 & 3 & \{13\}1231\{12\} \\ 
	\hline 
	Label on $F_{-2}$:& 2 & 1 & 3 & \{23\}2132\{21\}\\ 
	\hline 
	Label on $F_{-3}$: & 3 & 1 & 2 & \{32\}3123\{31\} \\ 
	\hline 
\end{tabular}
\end{center}
\fi

\begin{definition}
\label{def:permutation}
A labeling $L: \vrt(T)\to 2^{[n]}$ is \textbf{consistent} if, for every
$k\in[n]$ and vertex $x_k\in F_{-k}$:
\begin{align*}
L(x_k) = \perm{k}(L(\friend{k}(x_k)))
\end{align*}
where $\perm{k}$ is defined by (\ref{eq:perm}),
and $\friend{k}(x_k)$ is a friend of $x_k$ on $F_1$, derived from $x_k$ by moving its $k$-th coordinate to position 1.
\end{definition}
Note that $L(x_1)$ may be a set of more than one label, and in this case, consistency implies that $L(x_k)$ is a set with the same number of labels. For example, if $x_1\in F_{-1}$ and $L(x_1)=\{1,2\}$ and $x_3\in F_{-3}$ then $L(x_3)=\perm{3}(\{1,2\}) = \{\perm{3}(1),\perm{3}(2)\} = \{3,1\}$.

Figures
\ref{fig:single-agent},  \ref{fig:single-agent-negative} 
show examples of consistent labelings.

Consistency has implications on the possible sets of labels on faces $F_J$ where $|J|\leq n-2$. Such faces are intersections of two or more $n-1$-dimensional faces. 
For example, let $x$ be the main vertex $F_3=(0,0,1)$. Then, 
$x$ is a friend of itself, with 
$\friend{2}(x) = x$. Therefore, consistency implies that $L(x) = \perm{2}(L(x))$. Hence, $L(x)$ contains $2$ if-and-only-if it contains $1$.
This makes sense: since all empty pieces are identical, the agent prefers an empty piece if and only if it prefers all empty pieces.
This is generalized in the following%
\ifdefined\FULLVERSION
\else
lemma, whose proof appears in the full
 version
\fi
:
\begin{lemma}
\label{lem:zeros}
Let $L$ be a consistent labeling. Then, for every vertex $x\in F_{[n]\setminus J}$, either $L(x)\cap J = J$ or $L(x) \cap J = \emptyset$.
\end{lemma}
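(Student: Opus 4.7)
The plan is to reduce the problem to a canonical ``front-loaded'' friend of $x$, for which the consistency condition becomes a symmetric-group invariance statement. Without loss of generality take $J = Z(x) := \{k : x_k = 0\}$, since the claim for the full zero-set implies the claim for any subset by intersection. Write $m := |Z(x)|$, and let $\tilde x$ be the friend of $x$ obtained by pushing all zeros to the front, i.e., $\tilde x$ has the same ordered sequence of nonzero coordinates as $x$ but with zero-coordinates exactly $\{1,\ldots,m\}$. Since $T$ is friendly, $\tilde x \in \vrt(T)$.

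First I would prove the dichotomy for the canonical friend: $L(\tilde x) \cap \{1,\ldots,m\}$ is either $\emptyset$ or $\{1,\ldots,m\}$. For each $k \in \{1,\ldots,m\}$, the first $k-1$ coordinates of $\tilde x$ vanish, so $\friend{k}(\tilde x) = \tilde x$, and consistency forces $L(\tilde x) = \perm{k}(L(\tilde x))$. Each $\perm{k}$ is a cyclic permutation of $\{1,\ldots,k\}$ fixing $\{k+1,\ldots,n\}$ pointwise; a short computation (for instance $\perm{k-1}^{-1}\perm{k}$ yields the transposition $(1,k)$) shows that $\perm{2},\ldots,\perm{m}$ generate the full symmetric group $S_m$ on $\{1,\ldots,m\}$. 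Consequently $L(\tilde x) \cap \{1,\ldots,m\}$ is setwise $S_m$-invariant and therefore is empty or everything.

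Next I would transfer this dichotomy back to $x$ along an explicit chain of friend-operations. Writing the zero-positions of $x$ as $j_1 < j_2 < \cdots < j_m$, set $x^{(0)} := x$ and $x^{(i+1)} := \friend{k_i}(x^{(i)})$ with $k_i := j_{m-i}+i$. A short induction confirms that the zero-set of $x^{(i)}$ equals $\{1,\ldots,i\} \cup \{j_1+i,\ldots,j_{m-i}+i\}$, so $x^{(m)} = \tilde x$. Iterating the consistency equation along the chain then yields $L(x) = \sigma(L(\tilde x))$ with $\sigma := \perm{k_0}\circ\perm{k_1}\circ\cdots\circ\perm{k_{m-1}}$.

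The last ingredient is that $\perm{k}$ is by construction the permutation that identifies each position of $\friend{k}(y)$ with the position of $y$ carrying the same physical piece. Composing along the chain, $\sigma$ maps position-indices of $\tilde x$ onto those of $x$, and in particular sends zero-positions to zero-positions, so $\sigma(\{1,\ldots,m\}) = J$. Combined with the first step, this gives $L(x) \cap J = \sigma\bigl(L(\tilde x) \cap \{1,\ldots,m\}\bigr) \in \{\emptyset, J\}$, as required. The main obstacle I expect is the indexing work in the chain construction---verifying that each $k_i$ really is a zero of $x^{(i)}$ and that the resulting composite $\sigma$ maps $\{1,\ldots,m\}$ onto $J$ (not merely onto some other $m$-element subset); once that bookkeeping is handled, the symmetric-group step and the final identification become immediate.
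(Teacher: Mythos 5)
Your proof is correct and takes essentially the same approach as the paper's: reduce to the canonical ``front-loaded'' friend whose zero set is a prefix $\{1,\ldots,m\}$, use self-friend consistency to derive that its label set is invariant under a cyclic structure on the zero coordinates, and transfer the dichotomy back to $x$ by composing the consistency relations along a chain of friend operations. Two small remarks: (i) your initial reduction to $J = Z(x)$ and the $S_m$-generation argument are both more work than necessary --- the paper simply uses the single self-friend relation at $k = m$, observing that $L(\tilde x) = \perm{m}(L(\tilde x))$ and that $\perm{m}$ restricted to $\{1,\ldots,m\}$ is already a full $m$-cycle (hence has a single orbit there), which gives the dichotomy immediately without passing through the full symmetric group; and (ii) your chain of friend operations is indexed from the largest zero-position down while the paper's goes from smallest up, but both reach the same canonical vertex and yield an equivalent conjugation of $\{1,\ldots,m\}$ onto $J$.
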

\ifdefined\FULLVERSION
\begin{proof}
We first prove the lemma for the special case where 
$J = [k]$ for some $k\in [n]$. I.e, the first $k$ coordinates of $x$ are 0.
Now, $x\in F_{-k}$, and $x = \friend{k}(x)$ (it is a friend of itself). 
Therefore, consistency implies that $L(x)=\perm{k}(L(x))$.
By looking at the function $\perm{k}$, it is evident that, if $L(x)$ contains any element of $[k]$, it must contain them all.

We now consider the general case, where $J = \{i_1,\ldots,i_k\}$ for some $k$ indices in $[n]$. Suppose $i_1 < \cdots < i_k$ and let:
\begin{align*}
y = \friend{i_k}(\friend{i_{k-1}}(\ldots \friend{i_1}(x)))
\end{align*}
By the consistency of $L$:
\begin{align}
\label{eq:Lv}
L(x) = \perm{i_1} (\perm{i_2} ( \cdots \perm{i_k}(L(y))))
\end{align}
Additionally, 
\begin{align}
\label{eq:Jv}
J = \perm{i_1} (\perm{i_2} ( \cdots \perm{i_k}([k])))
\end{align}
We already proved that the lemma holds 
for $y$, whose set of zero coordinates is $[k]$. 
Hence, by \eqref{eq:Lv} and \eqref{eq:Jv} it also holds for $x$, whose set of zero coordinates is $J$.
\end{proof}

Based on Lemma \ref{lem:zeros}, given a labeling $L$ and a vertex $x\in F_{[n]\setminus J}$, we say that:
\begin{itemize}
\item $x$ is a \emph{positive vertex} if $L(x)\cap J = \emptyset$;
\item $x$ is a \emph{negative vertex} if $L(x)\cap J = J$.
\end{itemize}
In a positive vertex the agent prefers a nonempty piece; in a negative vertex the agent prefers an empty piece.
\fi

Our goal now is to prove that, if all $n$ agent-labelings are consistent, 
then an envy-free simplex exists. We proceed in two steps.
\begin{itemize}
\item If all labelings $L_1,\ldots,L_n$ are consistent, then there exists a single consistent combined labeling $L^W$ (Subsection \ref{sub:combining}).
\item If a labeling is consistent, then it has a fully-labeled simplex (Subsections \ref{sub:degree}-\ref{sub:permutation-degree}).
\end{itemize}
\subsection{Combining n labelings to a single labeling}
\label{sub:combining}
The consistency condition is valid for a single agent. We have to find a way to combine $n$ different consistent labelings into a single consistent labeling. For this we need several definitions.
\begin{definition}
An \emph{ownership-assignment} of a triangulation $T$ is a function from the vertices of the triangulation to the set of $n$ agents, $W: \vrt(T)\to \{A_1,\ldots,A_n\}$.
\end{definition}
\begin{definition}
Given a triangulation $T$, $n$ labelings $L_1,\ldots,L_n$, and an ownership assignment $W$, the \emph{combined labeling} $L^W$ is the labeling that assigns to each vertex in $\vrt(T)$ the label/s assigned to it by its owner. I.e., if $W(x)=A_i$, then $L^W(x):= L_i(x)$.
\end{definition}

\begin{definition}\label{def:diverse}
An ownership-assignment $W$ is called:

(a) \emph{Diverse} --- if in each sub-simplex in $T$, each vertex of the sub-simplex has a different owner;

(b) \emph{Friendly} --- if it assigns friends to the same owner. I.e., for every pair $x,y$ of friends (see Definition \ref{def:friends}), $W(x)=W(y)$.
\end{definition}
The diversity condition was introduced by \citet{Su1999Rental}.
As an example, the ownership-assignment of Figure \ref{fig:triangulation-ownership} is diverse. However, it is not friendly. For example, the two vertices near $(1,0,0)$ are friends since their coordinates are $(.8,.2,0)$ and $(.8,0,.2)$, but they have different owners ($B,C$). This means that the combined labeling is not necessarily consistent.
\ifdefined\FULLVERSION
It is easy to construct a friendly ownership-assignment: go from $F_1$ towards $F_2$, assign the vertices to arbitrary owners, then assign the vertices from $F_1$ towards $F_3$ and from $F_2$ towards $F_3$ to the same owners. However, in general it will not be easy to extend this to a diverse assignment.

Does there always exist an ownership-assignment that is both diverse and friendly? The following lemma shows that the answer is yes.
\fi
Fortunately, there always exists an ownership-assignment that is both friendly and diverse.
\begin{figure}
	\begin{center}
		\includegraphics[width=.44\columnwidth]{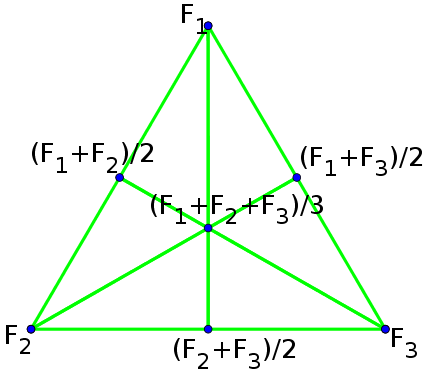}
		\hskip .04\columnwidth
		\includegraphics[width=.44\columnwidth]{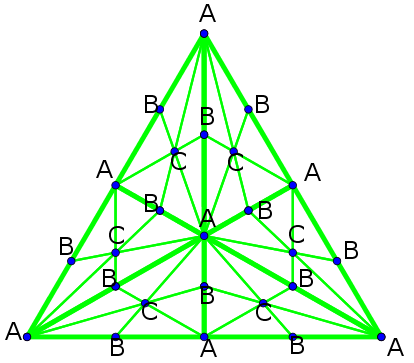}
	\end{center}
	\vskip -.4cm	
	\caption{\label{fig:ownership-bary}
	\textbf{Left}: barycentric subdivision of a triangle.
	\\
	\textbf{Right}: Barycentric triangulation of a triangle, with a friendly and diverse ownership assignment (here $A,B,C$ are agents $A_1,A_2,A_3$).
	}
	\vskip -.4cm
\end{figure}

\begin{lemma}
\label{lem:friendly-and-diverse}
For any $n\geq 3$ and $\delta>0$, there exists a friendly triangulation $T$ of $\Delta^{n-1}$ where the diameter of each sub-simplex is $\leq \delta$, and an ownership-assignment of $T$ that is friendly and diverse.
\end{lemma}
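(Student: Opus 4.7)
The plan is to take $T$ to be the $k$-fold iterated barycentric subdivision of $\Delta^{n-1}$, with $k$ chosen so that $\bigl(\tfrac{n-1}{n}\bigr)^k\operatorname{diam}(\Delta^{n-1}) \leq \delta$, which bounds all sub-simplex diameters by $\delta$. For the ownership, I would define $W(b_\sigma) := A_{\dim(\sigma)+1}$, where $b_\sigma$ denotes a vertex of $T$ viewed as the barycenter of a sub-simplex $\sigma$ of the previous iteration $T_{k-1}$; this is well-defined because in a barycentric subdivision the sub-simplex $\sigma$ can be recovered from its barycenter.

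The central observation is that every iterated barycentric subdivision is equivariant under the symmetric group $S_n$ acting on $\Delta^{n-1}$ by permuting coordinates. I would prove this by induction on $k$: for $T_1$, a permutation $\pi$ sends the vertex $b_J = \frac{1}{|J|}\sum_{j\in J}e_j$ to $b_{\pi(J)}$ and carries chains of subsets to chains of subsets; for $T_k$ with $k\geq 2$, the permutation $\pi$ permutes the sub-simplices of $T_{k-1}$ by the inductive hypothesis, so it sends $b_\sigma$ to $b_{\pi(\sigma)}$ and carries flags to flags. Friendliness of $T$ then follows, since two friends $x,y$ can always be related by an explicit $\pi \in S_n$: construct $\pi$ by matching the zero positions of $x$ with those of $y$, and the $i$-th nonzero coordinate of $x$ with the $i$-th nonzero coordinate of $y$ (which agree by the definition of friendship). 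So $x \in \vrt(T)$ forces $y = \pi(x) \in \vrt(T)$.

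The same symmetry principle handles both required properties of $W$. Diversity is immediate: every sub-simplex of $T$ corresponds to a flag $\sigma_0 \subsetneq \cdots \subsetneq \sigma_{n-1}$ in $T_{k-1}$ with $\dim(\sigma_i)=i$, so its vertices receive owners $A_1,\ldots,A_n$, all distinct. For friendliness of $W$: if $b_\sigma$ and $b_{\sigma'}$ are friends and $\pi \in S_n$ is a permutation sending one to the other, then by equivariance $b_{\sigma'} = \pi(b_\sigma) = b_{\pi(\sigma)}$, so $\sigma' = \pi(\sigma)$, which has the same dimension as $\sigma$; hence $W(b_\sigma) = W(b_{\sigma'})$. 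The main obstacle I anticipate is simply verifying that the friend relation is always realized by an actual element of $S_n$ rather than an abstract rearrangement, and that the barycenter map on $T_{k-1}$ is injective so that $W$ is unambiguously defined --- both should follow from routine checks, but they are the only places where the argument is not a direct consequence of the symmetry observation.
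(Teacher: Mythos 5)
Your proof is correct and follows essentially the same approach as the paper: iterated barycentric subdivision with ownership by barycentric level (your $\dim(\sigma)+1$ is exactly the paper's ``level''), with friendliness of both $T$ and $W$ justified by the symmetry of the barycentric subdivision under coordinate permutations. You make precise what the paper leaves at the level of ``by the symmetry of the barycentric subdivision, every two friend-vertices have the same level'', by explicitly invoking $S_n$-equivariance and realizing the friend relation via a coordinate permutation; this is a genuine sharpening of the written argument, but not a different route.
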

\begin{proof}
The construction is based on the \emph{barycentric subdivision}.\footnote{the explanation follows the Wikipedia page ``barycentric subdivision''.}
\def\BARYCENTRICGENERAL{}
\ifdefined\BARYCENTRICGENERAL
The barycentric subdivision of a simplex with main vertices $F_1,\ldots,F_n$ is constructed as follows.

Pick a permutation $\sigma$ of the main vertices. For every prefix of the permutation, $\sigma_1,\ldots,\sigma_m$ (for $m\range{1}{n}$), define $v_m$ as their barycenter (arithmetic mean): $v_m := (\sigma_1+\cdots+\sigma_m)/m$. We call $v_m$ a \emph{level-$m$ vertex}. The vertices $v_1,\ldots,v_n$ define a subsimplex.

Each permutation yields a different subsimplex, so all in all, the barycentric subdivision of an $(n-1)$-dimensional simplex contains $n!$ subsimplices. Note that each sub-simplex has exactly one vertex of each level $m\in[n]$.
\else
It is defined for any $n$, but for simplicity we present it for $n=3$. Consider a 2-dimensional simplex (= a triangle) with vertices $F_1 F_2 F_3$. There are 6 different permutations of these vertices. Each such permutation defines a sub-triangle whose vertices are: 1. the first vertex, e.g. $F_1$; 2. the average of the first two vertices, e.g. $(F_1+F_2)/2$; 3. the average of all three vertices, e.g. $(F_1+F_2+F_3)/3$. See Figure \ref{fig:ownership-bary}/Left.
The \emph{level} of a vertex in the subdivision is the number of vertices of the original triangle that are averaged in it (1, 2 or 3 respectively).
\fi

By recursively applying a barycentric subdivision to each subsimplex (as in Figure \ref{fig:ownership-bary}/Right), we get iterated \emph{barycentric triangulations}. 
The ownership assignment is determined by the levels of vertices in the last subdivision step: 
each vertex with level $i$ is assigned to agent $A_i$ (see Figure \ref{fig:ownership-bary}/Right). This ownership assignment is:
\begin{itemize}
\item diverse --- since for every $i$, each subsimplex has exactly one vertex of level $i$.
\item friendly --- since, by the symmetry of the barycentric subdivision, every two friend-vertices have the same level.\qedhere
\end{itemize}
\end{proof}

\begin{lemma}
\label{lem:friendly-ownership}
Let $L_1,\ldots,L_n$ be consistent labelings of a friendly triangulation $T$.
If $W$ is a friendly ownership-assignment, then the combined labeling $L^W$ is consistent.
\end{lemma}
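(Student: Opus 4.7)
The plan is to unwind the definitions and observe that the friendliness of $W$ reduces the consistency of $L^W$ to the consistency of the individual $L_i$.

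Concretely, to verify that $L^W$ is consistent I need to check, for every $k\in[n]$ and every vertex $x_k\in F_{-k}$, the identity
\begin{align*}
L^W(x_k) \;=\; \perm{k}\bigl(L^W(\friend{k}(x_k))\bigr).
\end{align*}
Fix such a $k$ and $x_k$, and let $y := \friend{k}(x_k)$. By Definition \ref{def:friends}, $x_k$ and $y$ share the same ordered sequence of nonzero coordinates, so they are friends. Because the triangulation $T$ is friendly, $y\in\vrt(T)$, so $L^W(y)$ is defined. Because $W$ is friendly, $W(x_k)=W(y)$; call this common owner $A_i$.

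Now the combined labeling reduces to agent $A_i$'s labeling at both points: $L^W(x_k)=L_i(x_k)$ and $L^W(y)=L_i(y)$. Applying the hypothesis that $L_i$ is itself consistent gives
\begin{align*}
L^W(x_k) \;=\; L_i(x_k) \;=\; \perm{k}\bigl(L_i(y)\bigr) \;=\; \perm{k}\bigl(L^W(y)\bigr),
\end{align*}
which is exactly the consistency condition for $L^W$ at $x_k$. Since $k$ and $x_k$ were arbitrary, $L^W$ is consistent.

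There is no real obstacle here: the lemma is essentially a bookkeeping statement, and the only place where anything substantive is used is the friendliness of $W$, which is precisely what guarantees that the owner of a boundary vertex agrees with the owner of its friend, so that one can invoke a single agent's consistency on the friend-pair. Lemma \ref{lem:friendly-and-diverse} is what assures us that such a $W$ exists in the first place.
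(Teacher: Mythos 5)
Your proof is correct and matches the paper's argument exactly, just with the bookkeeping written out explicitly; the paper's one-line proof ("consistency restricts only the labels of friends; since friends are labeled by the same owner, whose labeling is consistent, the combined labeling is consistent") is precisely the compressed version of your reasoning.
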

\begin{proof}
Consistency restricts only the labels of friends. Since all friends are labeled by the same owner, and the labeling of each owner is consistent, the combined labeling is consistent too.
\end{proof}
Lemmas \ref{lem:friendly-and-diverse} and \ref{lem:friendly-ownership} reduce the problem of finding an envy-free simplex with $n$ labelings, to the problem of finding a fully-labeled simplex with a single labeling. 
This is our next task.

\subsection{The Degree Lemma}
\label{sub:degree}
We want to prove that any consistent labeling has a fully-labeled simplex. For this we develop a generalization of Sperner's lemma.

In this subsection we will consider single-valued labelings. To differentiate them from the multi-valued labelings denoted by $L:\vrt(T)\to2^{[n]}$, we will denote them by $\ell: \vrt(T)\to [n]$.

We will use the following claim that we call the \emph{Degree Lemma}:
\begin{quote}
\emph{Let $\ell: \vrt(T)\to[n]$ a labeling of a triangulation $T$. The \emph{interior degree} of $\ell$ equals its \emph{boundary degree}.}
\end{quote}


To explain this lemma we have to explain what are ``interior degree'' and ``boundary degree'' of a labeling.%
\footnote{
The Degree Lemma can be proved as a corollary of much more general theorems in algebraic topology. See Corollary 3 in \citet{meunier2008combinatorial} and Corollary 3.1 in \citet{Musin2014Around}. For simplicity and self-containment we present it here using stand-alone geometric arguments.
Some of the definitions follow \citet{Matveev2006Lectures}.
}

Throughout this subsection, $Q$ denotes a fixed $n-1$-dimensional simplex in $\mathbb{R}^{n-1}$ 
whose vertices are denoted by $Q_1,\ldots,Q_n$.
$Q'$ denotes a fixed face of $Q$ of co-dimension 1 (so $Q'$ is an $n-2$-dimensional simplex). Most illustrations are for the case $n=4$.

\subsubsection{\textbf{Interior degree}}
\label{sub:deg-map}
Let $P$ be an $n-1$ dimensional simplex in $\mathbb{R}^{n-1}$.  
Let $g: P\to Q$ be a mapping that maps each of the $n$ vertices of $P$ to a vertex of $Q$. 
By basic linear algebra, there is a unique way to extend $g$ to an affine transformation from $P$ to $Q$. Define $\deg(g)$ as the sign of the determinant of this transformation:
\begin{itemize}
\item $\deg(g)=+1$ means $g$ is onto $Q$ and can be implemented by translations, rotations and scalings (but no reflections);
\item $\deg(g)=-1$ means $g$ is onto $Q$ and can be implemented by translations, rotations, scalings and a single reflection;
\item $\deg(g)=0$ means $g$ is not onto $Q$ (i.e., it maps the entire $P$ into a single face of $Q$ with dimension $n-2$ or less).
\end{itemize}
Every labeling $\ell: \vrt(P)\to [n]$ defines a mapping $g_\ell$ where for each vertex $v\in \vrt(P)$ whose label is $j$, we let $g_\ell(v)=Q_j$.
The pictures below show three such mappings with different degrees from different source simplices in $\mathbb{R}^3$ to the same target $Q$:
\footnote{
\label{fn:visualise}
To visualize the degree, imagine that you transform the source simplex until it overlaps the target simplex $Q$, such that each vertex labeled with $j$ overlaps $Q_j$. If you manage to do that without reflections then the degree is $+1$; otherwise it is $-1$. 
}
\begin{center}
\includegraphics[width=1\columnwidth]{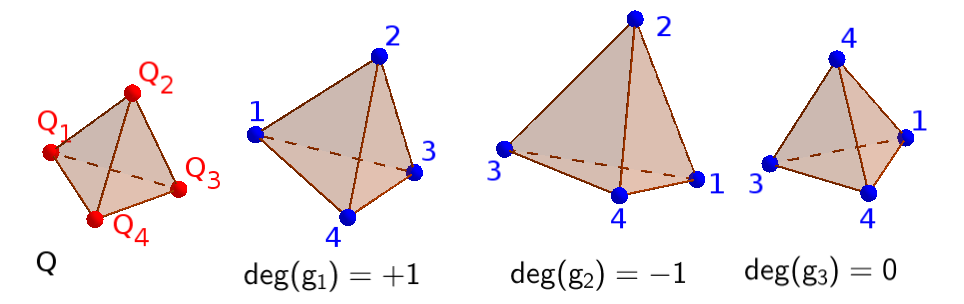}
\end{center}
We make several observations that relate the labeling to the degree.
\begin{enumerate}
\item If $P$ is \emph{fully-labeled} (each vertex has a unique label), then $g_\ell$ is onto $Q$, so $\deg(g_\ell)$ is either $+1$ or $-1$ (examples $g_1$ and $g_2$ above). %
If $P$ is not fully-labeled (two or more vertices have the same label), then $g_\ell$ is not onto $Q$ so $\deg(g_\ell)=0$ (ex. $g_3$).
\item Swapping two labels on $P$ corresponds to a reflection. Therefore, an odd permutation of the labels inverts the sign of $\deg(g_\ell)$, while an even permutation keeps $\deg(g_\ell)$ unchanged.
\end{enumerate}
\ifdefined\FULLVERSION
The following multiplicative property of the degree operator follows directly from the properties of determinants (or affine mappings). For every $g,g$: $\deg(g\circ g) = \deg(g)\cdot \deg(g)$
\fi

Let $T$ be a triangulation of some simplex and $\ell: \vrt(T)\to [n]$ a labeling. 
In each $n-1$ dimensional sub-simplex $t$ of the triangulation $T$, the labeling $\ell$ defines an affine transformation $g_{\ell,t}: t\to Q$.
The \emph{interior degree} of $\ell$ is defined as the sum of the degrees of all these transformations:
\begin{align*}
\ideg(\ell) &:= \sum_{t\in T} \deg(g_{\ell,t})
\end{align*}
Note that each fully-labeled sub-simplex of $T$ contributes either $+1$ or $-1$ to this sum and each non-fully-labeled sub-simplex contributes $0$. So if $\ideg(\ell)\neq 0$, there is at least 1 fully-labeled simplex.

\subsubsection{\textbf{Boundary degree}}
Consider now an $n-2$-dimensional simplex in $\mathbb{R}^{n-1}$. It is contained in a hyperplane and this hyperplane divides $\mathbb{R}^{n-1}$ into two half-spaces. 
Define an \emph{oriented simplex} in $\mathbb{R}^{n-1}$ as a pair of an $n-2$-dimensional simplex and one of its two half-spaces (so each such simplex has two possible orientations).

Let $P',Q'$ be two oriented simplices in $\mathbb{R}^{n-1}$.
Let $g$ be a mapping that maps each vertex of $P'$ to a vertex of $Q'$, and maps the half-space attached to $P'$ to the half-space attached to $Q'$. There are infinitely many ways to extend $g$ to an affine transformation, but all of them have the same degree. Three examples are shown below; an arrow denotes the half-spaces attached to the simplex:${}^{\ref{fn:visualise}}$
\begin{center}
\includegraphics[width=1\columnwidth]{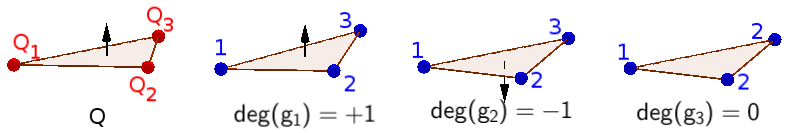}
\end{center}
Consider now an $n-2$-dimensional simplex $P'$ that is a face of an $n-1$-dimensional simplex $P$. Since $P$ is convex, it is entirely contained in one of the two half-spaces adjacent to $P'$. We orient $P'$ by attaching to it the half-space that contains $P$  (figuratively, we attach to $P'$ an arrow pointing inwards, towards the interior of $P$).

Let $Q'$ be a fixed $n-2$-dimensional face of $Q$ oriented towards the interior of $Q$. Let $\ell:\vrt(P')\to [n]$ be a labeling. If every label on $P'$ is one of the $n-1$ labels on $Q'$, then $\ell$ defines a mapping $g_\ell:P'\to Q'$ where for each vertex $v\in \vrt(P')$ whose label is $j$, we let $g_\ell(v)=Q_j$, and the half-space attached to $P'$ is mapped to the half-space attached to $Q'$. The same observations (1) and (2) above relate the labeling $\ell$ with the degree $\deg(g_\ell, Q')$. If some label on $P'$ is not one of the labels on $Q'$, then we define $\deg(g_\ell, Q')=0$.

It is convenient to define the degree of $g_\ell$ w.r.t. at all $n$ faces of $Q$ simultaneously. We denote by $\deg(g_\ell)$ (without the extra parameter $Q'$) the arithmetic mean of $\deg(g_\ell, Q')$ over all $n$ faces of $Q$:
\begin{align}
\label{eq:average-degree}
\deg(g_\ell) := {1\over n}\sum_{Q'\text{~face of~}Q} \deg(g_\ell, Q')
\end{align}
In this notation, if $\ell$ puts $n-1$ distinct labels on some face, then $\deg(g_\ell) = \pm {1\over n}$ since exactly one term in the mean is $\pm 1$ and the rest are zero. Otherwise, $\deg(g_\ell) = 0$ since all terms are zero.

Let $T$ be a triangulation of some simplex $P$ and let $\ell: \vrt(T)\to [n]$ be a labeling of the vertices of $T$. 
Denote by $\partial T$ the collection of $n-2$-dimensional faces of $T$ on the boundary of $P$. 
In each such face $t' \in \partial T$,
the labeling $\ell$ defines $n$ affine transformation $g_{\ell,t'}: t'\to Q'$ 
and their average degree $\deg(g_{\ell,t'})$ can be calculated as in \eqref{eq:average-degree}. 
The \emph{boundary degree} of $\ell$ is defined as the sum:
\begin{align*}
\bdeg(\ell) := \sum_{t' \in \partial T} \deg(g_{\ell,t'})
\end{align*}
We now re-state the degree lemma:
\begin{lemma}[Degree Lemma]
\label{lem:degree-T}
For every triangulation $T$ of a simplex $P$ and
every labeling $\ell:\vrt(T)\to[n]$:
\begin{align*}
\ideg(\ell) = \bdeg(\ell)
\end{align*}
\end{lemma}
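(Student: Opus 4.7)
The plan is to reduce the global equality $\ideg(\ell)=\bdeg(\ell)$ to a purely local identity on a single full-dimensional sub-simplex, then sum and exploit that every interior face of $T$ is shared by exactly two sub-simplices with opposite inward orientations. Concretely, I claim the following local identity: for every full-dimensional sub-simplex $t$ of $T$, every labeling $\ell$, and every fixed face $Q'$ of $Q$,
\begin{align*}
\deg(g_{\ell,t}) \;=\; \sum_{s'\text{ face of }t} \deg(g_{\ell,s'},Q'),
\end{align*}
where each face $s'$ of $t$ is oriented towards the interior of $t$. Given this, summing over all sub-simplices $t\in T$ gives $\ideg(\ell)$ on the left. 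On the right, each interior $(n-2)$-face of $T$ appears in exactly two sub-simplices with opposite inward orientations, so its two contributions cancel; only the $\partial T$-faces survive, and each of them is oriented towards the interior of $P$ (because the unique sub-simplex containing it lies on the interior side). Averaging the resulting identity over the $n$ faces $Q'$ of $Q$ then yields $\ideg(\ell)=\bdeg(\ell)$ by the definition of $\deg(g_{\ell,t'})$ as the average over $Q'$.

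The local identity is proved by case analysis on the label multiset of $t$. Say $Q'$ is the face of $Q$ missing vertex $Q_m$. If $t$ is fully labeled, then $g_{\ell,t}$ is a bijection onto $Q$, exactly one vertex $v^*$ of $t$ carries label $m$, and the face opposite $v^*$ has the same labels as $Q'$. The restriction of $g_{\ell,t}$ to that face is an affine bijection onto $Q'$; since $v^*$ is on the inward side of this face and $Q_m$ is on the inward side of $Q'$, the inward orientations are matched, so this face contributes $\deg(g_{\ell,t})$ and every other face contributes $0$. If instead some label $j$ is repeated on $t$ (so some label $k$ is missing and $\deg(g_{\ell,t})=0$), there are two sub-cases. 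When $k\neq m$, no face of $t$ carries the label set of $Q'$ (faces obtained by deleting the repeated label still miss $k$; faces obtained by deleting a non-$j$ vertex still have a duplicated $j$), so every term in the sum is $0$. When $k=m$, exactly the two faces $s_1,s_2$ opposite to the two $j$-labeled vertices $v_1,v_2$ carry the label set of $Q'$.

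The genuine work is showing that in this last case $\deg(g_{\ell,s_1},Q')+\deg(g_{\ell,s_2},Q')=0$, which I expect to be the main obstacle. The faces $s_1$ and $s_2$ differ by swapping $v_1$ for $v_2$, both of which are labeled $j$ and hence both sent to the same point $Q_j$; moreover, their inward orientations point to $v_1$ and $v_2$ respectively, which lie on opposite sides of the common codimension-$1$ face $s_1\cap s_2$. The plan is to pick any affine extension of $g_{\ell,t}$ that collapses $v_1,v_2$ to $Q_j$ and compare the two determinants arising from restricting to $s_1$ and to $s_2$: rewriting one extension in terms of the other exposes that swapping which of $v_1,v_2$ supplies the preimage of $Q_j$, together with swapping which side of $s_1\cap s_2$ is declared ``inward,'' produces exactly one sign flip and hence cancellation. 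A concrete low-dimensional check (e.g.\ $n=3$ with vertices labeled $1,1,2$ and $Q'$ the $12$-edge) confirms the sign.

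Once the local identity and the boundary-cancellation are in place, the rest is bookkeeping: exchange the sums $\sum_{Q'}\sum_t$ with $\sum_t\sum_{Q'}$, use the local identity for each fixed $Q'$, cancel the interior contributions, and divide by $n$. The only structural ingredients used beyond linear algebra are the multiplicativity/sign properties of $\deg$ listed in the excerpt and the combinatorial fact that interior $(n-2)$-faces of $T$ are shared by exactly two $(n-1)$-simplices, both standard consequences of the definition of a triangulation.
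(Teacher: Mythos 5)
Your proof is correct and follows essentially the same approach as the paper's: establish a local identity on each full-dimensional sub-simplex by case analysis on its label multiset, then sum over sub-simplices and let interior $(n-2)$-faces cancel in pairs because of their opposite inward orientations. The only cosmetic difference is that you state and prove the local identity for each fixed target face $Q'$ and average over $Q'$ at the very end, whereas the paper averages over $Q'$ already within its single-simplex step; the key cancellation for the repeated-label case (your $s_1, s_2$, the paper's $P'_+, P'_-$) rests on the same single-transposition/reflection observation in both arguments.
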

\begin{proof}
\textbf{Part 1.}
We first prove the lemma for the case when the triangulation $T$ is trivial --- contains only the single $n-1$ dimensional simplex $P$.
In this case, the sum $\ideg(\ell)$ contains a single term --- $\deg(g_\ell)$ --- which can be either $-1$ or $0$ or $1$. The sum $\bdeg(\ell)$ contains $n$ terms --- one for each face of $P$.
We consider several cases depending on the number of distinct labels on $\vrt(P)$.

If the number of distinct labels is $n$ (i.e., $P$ is fully-labeled), then $\ideg(\ell)$ is $+1$ or $-1$.
Each face of $P$ is labeled with $n-1$ distinct labels so its degree is $+{1\over n}$ or $-{1\over n}$.
The same affine mapping $g_\ell$ that maps $P$ to $Q$, also maps each face $P'$ to each face $Q'$ with corresponding labels. Therefore, all terms have the same sign, and we get either $+1 = \sum {+1\over n}$ or $-1 = \sum {-1\over n}$, both of which are true.

If the number of distinct labels is $n-2$ or less, then $P$ is not fully-labeled so $\ideg(\ell)=0$. No faces of $P$ are labeled with $n-1$ distinct labels,  so $\bdeg(\ell)=0$ too.

If the number of distinct labels is $n-1$, then $P$ is not fully-labeled so $\ideg(\ell)=0$. 
$P$ has exactly two faces with $n-1$ distinct labels; let's call them $P'_+$ and $P'_-$. For each $s\in\{+,-\}$ and for each face $Q'\subseteq Q$, let $g'_s$ be the mapping from $P'_s$ onto $Q'$. 
It can be proved that $\deg(g'_+)=-\deg(g'_-)$
\ifdefined\FULLVERSION
.\footnote{
To see this, suppose that, before mapping $P'_+$ and $P'_-$ onto $Q'$, we first map $P'_+$ onto $P'_-$, with no reflection. Let $g$ be this mapping. Then, the half-space attached to $g(P'_+)$ does not contain the interior of $P$ (figuratively, the arrow attached to $g(P'_+)$ points outwards). 
To align the orientations we must use reflection, so an orientation-preserving mapping $g':P'_+\to P'_-$ must have $\deg(g')=-1$.
Since $g'_+ = g'_-\circ g'$, we have $\deg(g'_+)=-\deg(g'_-)$.
}
\else
 (see full version). 
\fi
Therefore $\deg(g'_+)  + \deg(g'_-) = 0$ and so  $\bdeg(\ell) = 0$ too.
The latter case is illustrated below, where $Q'~=~Q_1 Q_3 Q_4$; the degree is $+1$ at the top 134 face and $-1$ at the bottom 134 face.
\begin{center}
\vskip -.25cm
\includegraphics[width=.6\columnwidth]{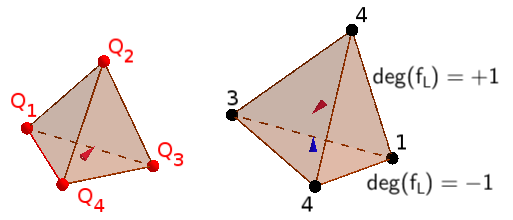}
\end{center}
\vskip -.2cm

\textbf{Part 2.}
We now prove the lemma for a general triangulation. 
For each $n-1$-dimensional sub-simplex $t\in T$, denote by $\ell_t$ the labeling $\ell$ in $t$,
and by $\ell_{t'}$ the labeling on its $n-2$-dimensional face $t'$. Then:
\begin{align*}
\ideg(\ell) &= \sum_{t\in T} \ideg(\ell_t)
&= \sum_{t\in T} \sum_{t' \text{~face of~}t} \bdeg(\ell_{t'}) && \text{By Part 1.}
\end{align*}
The sum in the right-hand side counts all $n-2$-dimensional faces in $T$ --- both on the boundary $\partial T$ and on the interior. Each face on the boundary is counted once since it belongs to a single sub-simplex, while each face in the interior is counted twice since it belongs to two sub-simplices.
The orientations of this face in its two sub-simplices are opposite, since the interiors of these sub-simplices are in opposite directions of the face. This is illustrated below:
\begin{center}
\vskip -.25cm
\includegraphics[width=.6\columnwidth]{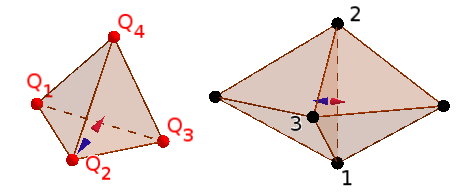}
\end{center}
\vskip -.3cm
Therefore, the two contributions of this face to $\bdeg(\ell_t)$ cancel out, 
and the right-hand side becomes
$\sum_{t'\in \partial T} \bdeg(\ell_{t'}) = \bdeg(\ell)$.
\end{proof}

An illustration of the Degree Lemma for $n=3$ is shown below:
\begin{center}
	\includegraphics[width=.8\columnwidth]{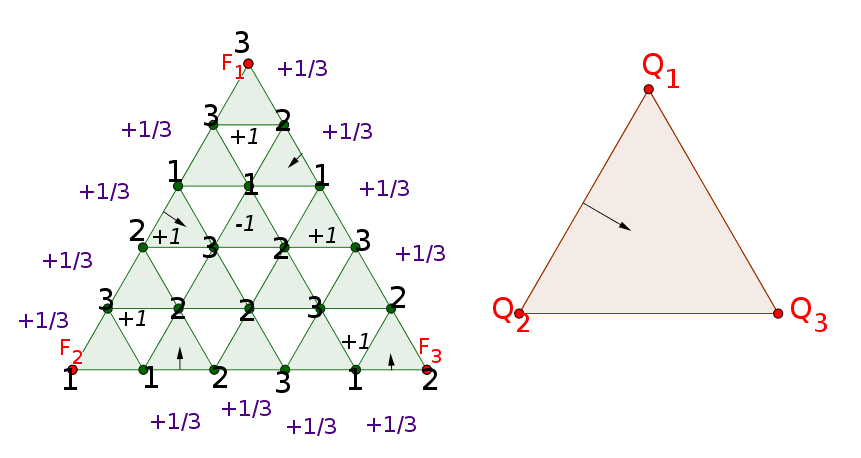}
	\label{fig:mapping-inner}
\end{center}
There are six fully-labeled triangles: in 
five of them, the transformation $g_\ell$ onto $Q$ requires no reflection so its degree is $+1$. In the sixth, the transformation $g_\ell$ onto $Q$ requires a single reflection so its degree is $-1$. Therefore: 
\hskip 1cm
$
\ideg(\ell)=+5-1=4
$.

At the boundary of $P$ there are four edges labeled $1,2$. The arrow adjacent to each edge indicates its orientation. Each of these edges can be transformed onto $Q'$ with no reflection while preserving the inwards orientation, so their degree is +1/3.
The same is true for the four edges labeled $2,3$ and $3,1$. 
 Therefore: $
\bdeg(\ell) = 12\cdot (1/3) = 4
$.


\subsection{Consistency $\scriptstyle\to$ Nonzero Boundary Degree}
\label{sub:permutation-degree}
The Degree Lemma reduces the problem of proving existence of a fully-labeled simplex, to the problem of proving that the boundary-degree is non-zero. Therefore, our next goal is to prove that every consistent labeling has a non-zero boundary degree. However, there is a technical difficulty: consistency is defined for multi-valued labelings, while the degree is only defined for single-valued labelings.
For the purpose of envy-free cake-cutting, we can convert a multi-valued labeling $L:\vrt(T)\to 2^{[n]}$ to a single-valued labeling
$\ell:\vrt(T)\to[n]$ by simply selecting, for each vertex $x\in \vrt(T)$, a single label from the set $L(x)$. In effect, we select for the agent one of his preferred pieces in that partition; this does not harm the envy-freeness.
If $\ell$ is created from $L$ using such a selection, we say that $\ell$ is \emph{induced by $L$}, and write $\ell\sim L$.

For our purposes, it is sufficient to prove that every consistent labeling $L$ induces \emph{at least one} labeling $\ell$ with non-zero boundary degree.
For this, it is sufficient to prove that the \emph{sum} of boundary degrees, taken over all labelings $\ell$ induced by $L$, is nonzero: $\sum_{\ell\sim L}{\bdeg(\ell)}\neq 0$.
The first step is the following lemma that relates the boundary degree to consistency.
\begin{lemma}
\label{lem:constistency-degree}
Let $L: \vrt(T)\to 2^{[n]}$ be a consistent labeling of a friendly triangulation of $\Delta^{n-1}$. 
Let $L[F_{-1}]$ be the restriction of $L$ to the face $F_{-1}$. Then:
\begin{align*}
\sum_{\ell\sim L}\bdeg(\ell) = n\cdot \sum_{\ell\sim L}\bdeg(\ell[F_{-1}])
\end{align*}
\end{lemma}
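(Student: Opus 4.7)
The plan is to decompose $\bdeg(\ell)$ as a sum over the $n$ boundary faces $F_{-1},\ldots,F_{-n}$ of $\Delta^{n-1}$, and then argue that, after summing over $\ell\sim L$, all $n$ summands are equal. Since every $(n-2)$-simplex $t'\in\partial T$ lies on a unique face $F_{-k}$, I would write
\begin{align*}
\bdeg(\ell) \;=\; \sum_{k=1}^{n} \bdeg(\ell[F_{-k}]),
\qquad
\bdeg(\ell[F_{-k}])\;:=\!\!\sum_{t'\subseteq F_{-k}}\!\deg(g_{\ell,t'}).
\end{align*}
Only the values of $\ell$ on $F_{-k}$ matter for the $k$-th term, so after summing over $\ell\sim L$ it suffices to prove that $\sum_{\ell\sim L}\bdeg(\ell[F_{-k}])$ is independent of $k$. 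The conclusion then follows by summing over the $n$ faces.

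For a fixed $k$, I would split the sum according to the restriction of $\ell$ to $F_{-k}$:
\begin{align*}
\sum_{\ell\sim L}\bdeg(\ell[F_{-k}]) \;=\; \Bigl(\prod_{v\notin F_{-k}}|L(v)|\Bigr)\cdot\!\!\sum_{\ell_k\sim L[F_{-k}]}\!\!\bdeg(\ell_k).
\end{align*}
Consistency gives the natural bijection $\Phi_k\colon\{\ell_1\sim L[F_{-1}]\}\to\{\ell_k\sim L[F_{-k}]\}$ defined by $(\Phi_k\ell_1)(v):=\perm{k}(\ell_1(\friend{k}(v)))$, and I would show that $\bdeg(\Phi_k\ell_1)=\bdeg(\ell_1)$ under this bijection. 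Since we are free to take $T$ to be the iterated barycentric triangulation of Lemma~\ref{lem:friendly-and-diverse}, the friend map $\friend{k}:F_{-k}\to F_{-1}$ is an isomorphism of sub-triangulations, inducing a bijection $t'\leftrightarrow \friend{k}(t')$ of $(n-2)$-simplices. For each such pair, $g_{\Phi_k\ell_1,t'}=\sigma_{\perm{k}}\circ g_{\ell_1,\friend{k}(t')}\circ \friend{k}$, where $\sigma_{\perm{k}}\colon Q\to Q$ is the vertex permutation. Both $\friend{k}$ (a $k$-cycle on coordinates) and $\sigma_{\perm{k}}$ (the same $k$-cycle on labels) have the same sign $(-1)^{k-1}$, and after averaging $\deg(g_{\ell,t'},Q')$ over the $n$ faces $Q'$ of $Q$ --- which absorbs the reindexing $Q'\mapsto\sigma_{\perm{k}}(Q')$ --- the two signs cancel, leaving $\deg(g_{\Phi_k\ell_1,t'})=\deg(g_{\ell_1,\friend{k}(t')})$.

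Finally, I would verify that the factor $\prod_{v\notin F_{-k}}|L(v)|$ is itself independent of $k$. Consistency implies that $|L|$ is constant on every friendship class (Lemma~\ref{lem:zeros} says the labels of friends have the same size). Each friendship class has the same number of representatives in each face $F_{-k}$ --- or, more precisely, the contribution of a single friendship class to $\prod_{v\notin F_{-k}}|L(v)|$ consists of the $|L|$-values of all friends except the unique friend lying on $F_{-k}$, which by the constant-size property does not depend on $k$. Combining the three observations, $\sum_{\ell\sim L}\bdeg(\ell[F_{-k}])$ is the same for every $k$, and summing over $k$ gives the factor $n$ in the statement. The main obstacle I foresee is Step~3 (the sign bookkeeping): intuitively consistency was designed so that labels on $F_{-k}$ and on $F_{-1}$ represent the same physical partition and therefore yield the same degree, but making this precise requires matching the orientation-reversal introduced by the coordinate $k$-cycle with the one introduced by the label $k$-cycle after averaging over the faces of $Q$.
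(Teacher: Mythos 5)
Your proof follows essentially the same route as the paper: decompose $\bdeg$ over the $n$ faces $F_{-k}$, push each face to $F_{-1}$ via the friend map $\friend{k}$, observe that the coordinate permutation $\friend{k}$ and the label permutation $\perm{k}$ have the same parity so the degrees match, and conclude each face contributes equally. You fill in two details the paper glosses over --- the multiplicity factor $\prod_{v\notin F_{-k}}|L(v)|$ when passing from a sum over all $\ell\sim L$ to a sum over local restrictions, and the identification of $\friend{k}$ as a simplicial isomorphism $F_{-k}\to F_{-1}$ --- and these are worth making explicit. Two small corrections: the fact that $|L(x_k)|=|L(\friend{k}(x_k))|$ follows directly from consistency ($\perm{k}$ is a bijection), not from Lemma~\ref{lem:zeros}; and you do not need to specialize to the barycentric triangulation, since the friendliness hypothesis on $T$ is exactly what makes $\friend{k}$ a bijection between the boundary sub-simplices of $F_{-k}$ and $F_{-1}$.
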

\begin{proof}
Rewrite the left-hand side as a sum over all $n-2$ dimensional sub-simplices $t$ on the boundary of $\Delta^{n-1}$,
and then rewrite this boundary as a union of the  faces $F_{-k}$ for $k\in[n]$:
\begin{align}
\label{eq:sum-bdeg}
\sum_{\ell\sim L}\bdeg(\ell) =
\sum_{t \in T(\partial \Delta^{n-1})} 
\sum_{\ell\sim L}
\bdeg(\ell[t])
=
\sum_{k=1}^n
\sum_{t_k \in T(F_{-k})} 
\sum_{\ell\sim L}
\bdeg(\ell[t_k])
\end{align}
For each subsimplex $t_k$ in $T(F_{-k})$, let $\friend{k}(t_k)$ be a subsimplex in $F_{-1}$, whose vertices are the friends of the vertices of $t_k$. Recall from Definition \ref{def:permutation} that $\friend{k}(t_k)$ is derived from $t_k$ by moving the zero coordinate  in each vertex of $t_k$ from position $k$ to position $1$ and pushing its other coordinates rightwards.
Since $T$ is a friendly triangulation, it contains $\friend{k}(t_k)$.
The function $\friend{k}$ is a linear mapping from $\Delta^{n-1}$ to itself: it is a permutation of coordinates, and the permutation is even iff $k-1$ is even. Therefore, the degree of this mapping is $+1$ if $k-1$ is even and $-1$ if it is odd.
An illustration is shown in Figure \ref{fig:consistency-orientation}.
\begin{figure}
\begin{center}
\includegraphics[width=.45\columnwidth]{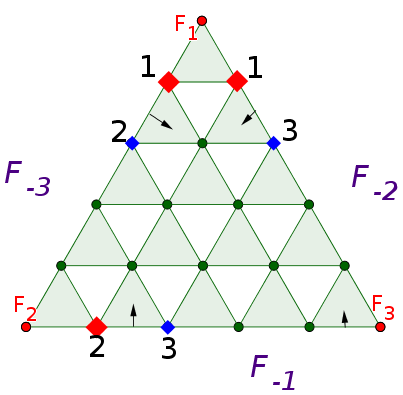}
\includegraphics[width=.45\columnwidth]{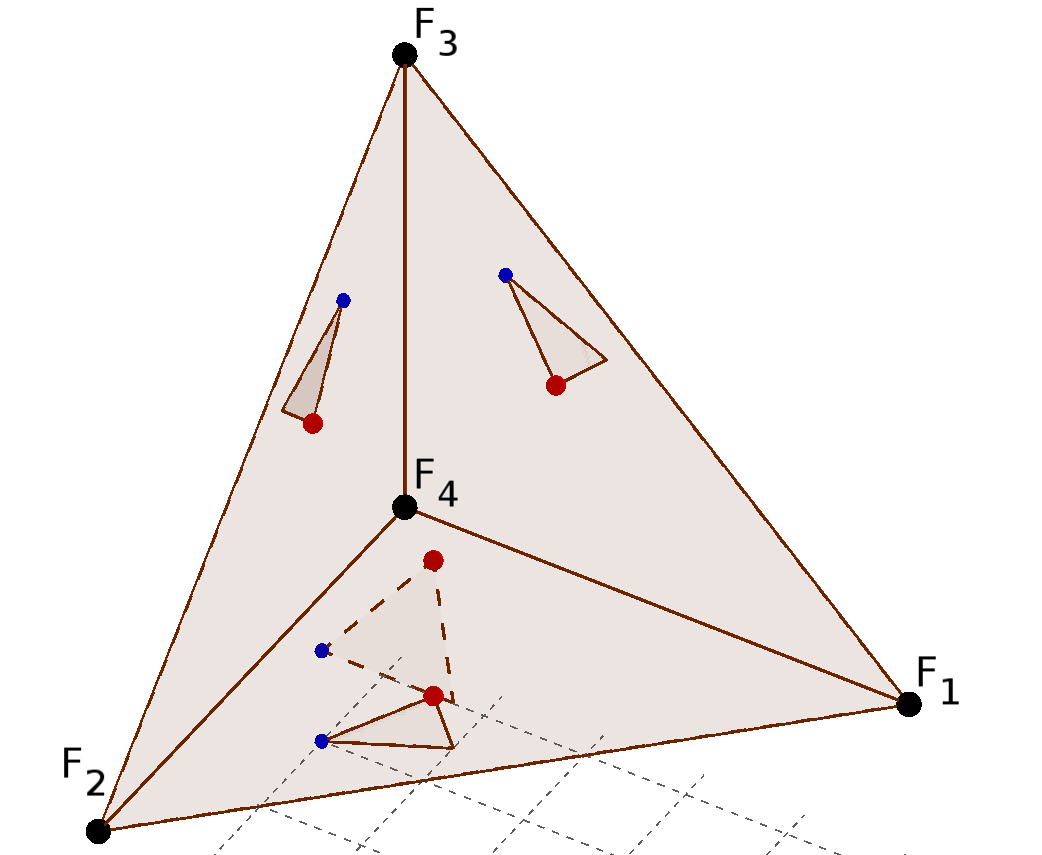}
\end{center}
\caption{
\label{fig:consistency-orientation}
An illustration of friend-simplices on the boundary of $\Delta^{n-1}$ for $n=3$ (left) and for $n=4$ (right).
The  $n$ vertices with the big-red marks are friends, and the $n$ vertices with the small-blue marks are friends.
The orientation-keeping transformation from a simplex in face $F_{-i}$ to its friend in face $F_{-k}$ can be done with no reflections if $i-k$ is even and with a single reflection if $i-k$ is odd.
}
\end{figure}

By consistency, for each vertex $x_k$ of $t_k$: $
L(x_k) = \perm{k}(L(\friend{k}(x_k)))
$.
The permutation $\perm{k}$, too, is even if-and-only-if $k-1$ is even. 
This is evident from the definition of $\perm{k}$ in \eqref{eq:perm}. The case $n=3$ is illustrated in Table \ref{tab:perm}: $\perm{1}$ is even (the identity permutation), $\perm{2}$ is odd (maps 123 to 213) and $\perm{3}$ is even (maps 123 to 312). 

Since $\friend{k}$ and $\perm{k}$ have the same sign, the contribution of $t_k$ to the sum of degrees is exactly the same as the contribution of $\friend{k}(t_k)$. 
Each subsimplex $t$ in $F_{-1}$ is a friend of exactly $n$ simplices $t_k$ in $F_{-k}$, for $k\in[n]$.
Therefore, \eqref{eq:sum-bdeg} equals:
\begin{align*}
\sum_{k=1}^n
\sum_{t \in T(F_{-1})} 
\sum_{\ell\sim L}
\bdeg(\ell[t])
=
n\cdot 
\sum_{\ell\sim L}
\bdeg(L[F_{-1}]).
\end{align*}
\end{proof}

Based on Lemma \ref{lem:constistency-degree},
we now prove that when $n=3$, the boundary degree of a consistent labeling is nonzero.

\begin{lemma}
	\label{lem:permutation-degree}
Let $L: \vrt(T)\to 2^{[3]}$ be a consistent labeling of a friendly triangulation of $\Delta^{3-1}$.
Then, $L$ induces a single-valued labeling $\ell:\vrt(T)\to [3]$ with:
\begin{align*}
\bdeg(\ell)
\not\equiv 0~\operatorname{mod}~3 
\end{align*}
\end{lemma}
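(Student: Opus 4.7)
The plan is to apply Lemma \ref{lem:constistency-degree} after first reducing $L$ to a canonical sub-labeling $L'$. Using consistency together with Lemma \ref{lem:zeros}, the friend-orbit $\{F_1,F_2,F_3\}$ forces the triple $(L(F_1),L(F_2),L(F_3))$ to be one of three possibilities: all singletons $L(F_j)=\{j\}$, all doubletons $L(F_j)=[3]\setminus\{j\}$, or all the full set $[3]$. In the singleton and full cases I would restrict to $L'(F_j)=\{j\}$ (which is allowed in the full case because $2$ is the unique fixed point of $\pi_{-2}^{-1}\pi_{-3}=(1\,3)$, so $\pi_{-2}(2)=\pi_{-3}(2)=1$ gives a consistent reduction at all three main vertices); in the doubleton case no such main-vertex reduction exists, so $L'(F_j)=L(F_j)$. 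For every friend-orbit of non-main vertices I would pick a representative label on its $F_{-1}$-member and propagate it to the other faces via the permutations $\pi_{-k}$. The resulting $L'\subseteq L$ remains consistent and has $|L'(v)|=1$ at every non-main vertex.

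Next, I would establish the key orientation identity: the integer $W(\ell):=3\,\bdeg(\ell[F_{-1}])$ equals the signed count of cyclic steps made by the label sequence $(\ell(F_2),\ldots,\ell(F_3))$ along $F_{-1}$ around the target cycle $Q_1\to Q_2\to Q_3\to Q_1$, whence
\[
W(\ell)\equiv \ell(F_3)-\ell(F_2)\pmod 3.
\]
Combining this with Lemma \ref{lem:constistency-degree} applied to $L'$ yields
\[
\sum_{\ell\sim L'}\bdeg(\ell)=\sum_{\ell\sim L'}W(\ell)\equiv \sum_{\ell\sim L'}\bigl[\ell(F_3)-\ell(F_2)\bigr]\pmod 3.
\]
In the singleton and full cases the reduced $L'$ admits a unique induced $\ell$ with $(\ell(F_2),\ell(F_3))=(2,3)$, giving a single-term sum $\equiv 1\pmod 3$. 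In the doubleton case $L'$ admits eight inducements (since $|L'(F_j)|=2$), and a direct count gives $\sum[\ell(F_3)-\ell(F_2)]=4(1+2)-4(1+3)=-4\equiv 2\pmod 3$. In every case the sum is nonzero modulo $3$, so at least one $\ell\sim L'\subseteq L$ satisfies $\bdeg(\ell)\not\equiv 0\pmod 3$, as required.

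The main obstacle is the winding identity $W(\ell)\equiv \ell(F_3)-\ell(F_2)\pmod 3$. Each fully-labeled sub-edge of $F_{-1}$ with labels $(a,b)$ contributes $\pm 1/3$ to $\bdeg(\ell[F_{-1}])$ according to whether $b-a\equiv +1$ or $-1\pmod 3$ (using the inward orientation of $F_{-1}$ toward $F_1$ and of the target edge $Q_aQ_b$ toward the remaining vertex of $Q$); these $\pm 1$ steps telescope modulo $3$ along the path from $F_2$ to $F_3$ to give the endpoint difference. A secondary technical point is verifying that the reduction to $L'$ genuinely preserves consistency at main vertices in each case, and in particular that no main-vertex reduction is possible in the doubleton setting --- which is precisely why that case must be dispatched through the modular summation rather than by a direct construction of a unique $\ell$.
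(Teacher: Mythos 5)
Your proof is correct and takes essentially the same route as the paper's. You classify the main-vertex labels via Lemma~\ref{lem:zeros} into the same cases (the paper lumps your ``singleton'' and ``full'' cases together as the ``positive case'' $j\in L(F_j)$, reducing both to $L'(F_j)=\{j\}$), reduce to a single-valued labeling at non-main vertices, use the winding of labels along $F_{-1}$ together with Lemma~\ref{lem:constistency-degree} to compute $\bdeg(\ell)\equiv 1\pmod 3$ in the positive case, and sum over all $2^3=8$ inducements in the doubleton/negative case to get $\equiv 2\pmod 3$ --- exactly the paper's computation $3k+1$ and $3k+2$, with your ``winding identity'' $3\,\bdeg(\ell[F_{-1}])\equiv\ell(F_3)-\ell(F_2)\pmod 3$ just making explicit the paper's ``net number of cycles is $k+1/3$'' argument.
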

\begin{figure}
\begin{center}
\includegraphics[width=0.44\columnwidth]{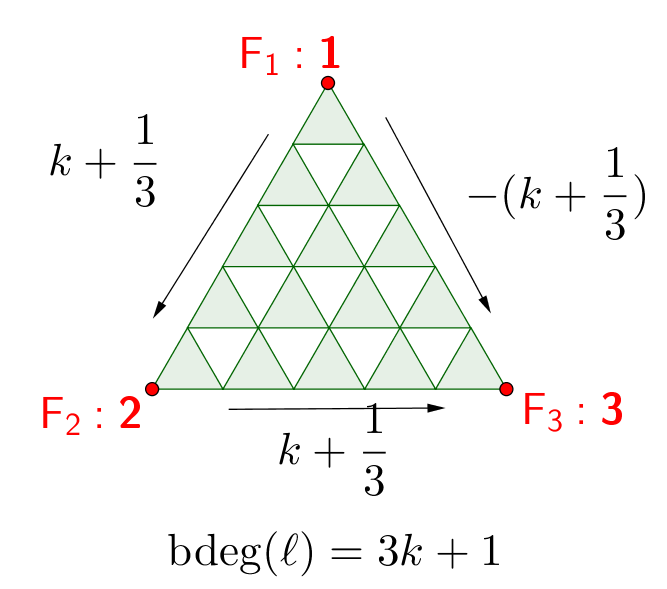}
\hskip 0.04\columnwidth
\includegraphics[width=0.44\columnwidth]{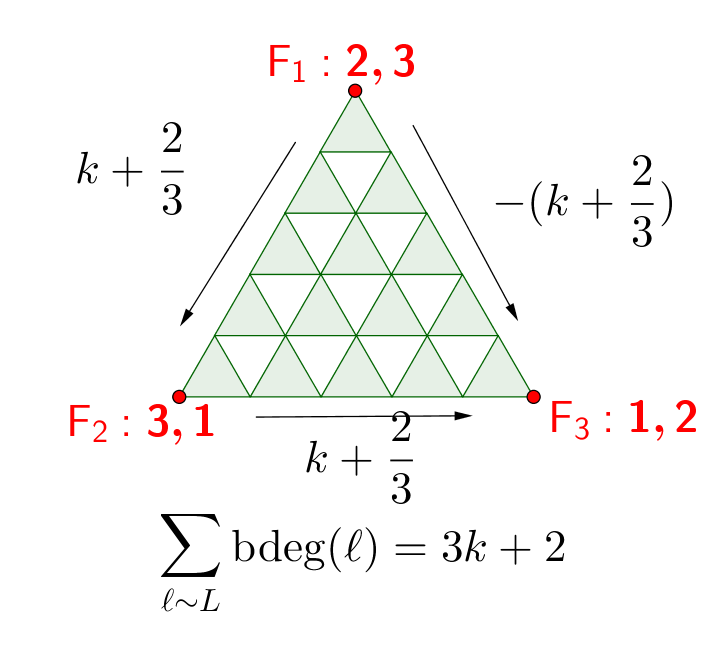}
\end{center}
\caption{\label{fig:bdeg-of-consistent-labelings}
\textbf{Left}: Boundary degrees of labelings in the positive case.
The sequence of labels from $F_{1}$ to $F_2$ is the same as the sequence from $F_{2}$ to $F_3$ up to an even permutation on the labels, so they contribute the same amount $k+1/3$ to the boundary-degree.
The sequence of labels from $F_1$ to $F_3$ is the same up to an odd permutation on the labels, but when we travel around $\Delta^{3-1}$
counter-clockwise, we see these labels in the opposite order, so this face contributes to the boundary degree $--(k+1/3) = k+1/3$.
 \\
\textbf{Right}: Boundary degrees of labelings in the negative case.
}

\end{figure}
\begin{proof}
\ifdefined\FULLVERSION
\else
\vskip -.45cm
\fi
First, we simplify $L$ by removing multiple labels while keeping $L$ consistent. This can be done arbitrarily for any interior vertex, since these vertices are not bound by consistency. 
For any boundary vertex $x$ that is not a main vertex, we remove labels consistently. For example, if a label $x_1\in F_{-1}$ is originally labeled by $\{2,3\}$ and we remove the 2, then by consistency its friend $x_3\in F_{-3}$ is originally labeled by $\{1,2\}$ and we remove the 1.

For the main vertices, Lemma \ref{lem:zeros} implies that there are exactly two cases regarding the labels on the main vertices.

\textbf{Positive case}  (Figure \ref{fig:bdeg-of-consistent-labelings}/Left): For each main vertex $F_j$,  $j\in L(F_j)$ (this corresponds to the owner of the main vertices valuing the entire cake as weakly-positive).
\ifdefined\FULLVERSION
\footnote{
At first glance, one might think that in the positive case each face $F_{ij}$ should be labeled only with $i$ and $j$, since there always exists a non-empty piece with a positive value, so this case could be handled by Sperner's lemma. While this is true for a single agent-labeling, it is \emph{not} true for a combined labeling: it is possible that the labels on the main vertices belong to Alice while the adjacent labels on the faces $F_{ij}$ belong to Bob (as in Figure \ref{fig:ownership-bary}), and Bob might think that the entire cake is negative.
}
\fi
We remove all other labels from $F_j$. The labeling remains consistent and it is now single-valued so we denote it by $\ell$.

Let $\ell[F_{-1}]$ be the labeling $\ell$ restricted to the face $F_{-1}$. 
Its boundary degree is determined by the sequence of labels from $F_2$ to $F_3$ --- it equals the number of cycles of labels in the order $1-2-3$ minus the number of cycles of labels in the opposite order $3-2-1$. Since the sequence of labels starts with 2 (on $F_2$) and ends with 3 (on $F_3$), the net number of cycles is fractional --- it is $k+1/3$ for some integer $k$. 
By Lemma \ref{lem:constistency-degree}, $\bdeg(\ell) = 3\cdot \bdeg(\ell[F_{-1}]) = 3 k+1$.

\textbf{Negative case}  (Figure \ref{fig:bdeg-of-consistent-labelings}/Right): For each main vertex $F_j$, $L(F_j) = [n]\setminus \{j\}$
(this corresponds to the owner of the main vertices valuing the entire cake as strictly negative).
Here, by Lemma \ref{lem:zeros}, there is no way to remove labels while keeping $L$ consistent.
So $L$ induces $2^3 = 8$ single-valued labelings.
\ifdefined\FULLVERSION
 We cannot just pick one of the eight arbitrarily, since for each single selection, the boundary degree might be zero in some cases, as the induced labelings need not be consistent.
two such cases are illustrated below:%
\footnote{
At first glance, one might think that, if the triangulation is sufficiently fine, we will not have such anomalous cases, since by continuity, the label in each vertex sufficiently close to $F_j$ should be one of the labels in $F_j$. As in the previous footnote, this is true for a single agent-labeling but false for a combined labeling.
}
\begin{center}
\includegraphics[width=.42\columnwidth]{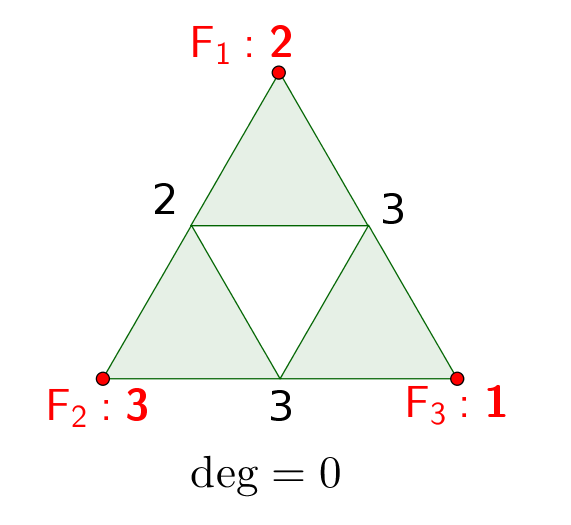}
\hskip 0.04\columnwidth
\includegraphics[width=.42\columnwidth]{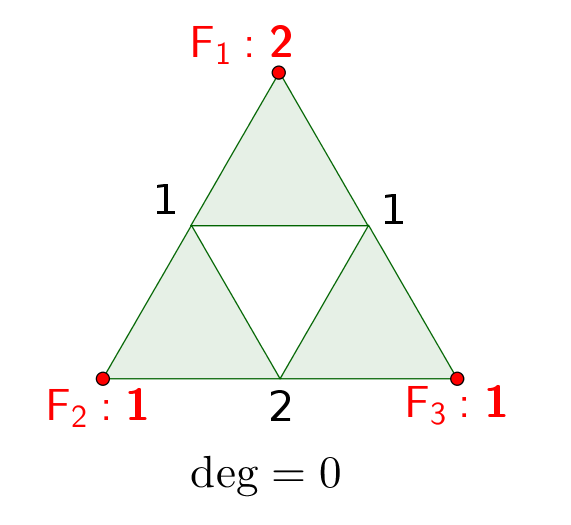}
\vskip -.5cm
\end{center}
\fi
However, the sum of boundary-degrees of these eight labelings is nonzero modulo 3.
Consider the face $F_{-1}$ 
and focus on the labels on the vertices $F_2,F_3$. If these labels are $3,2$ then the degree is $k_1-1/3$ for some integer $k_1$; if they are $3,1$ then it is $k_2+1/3$; if they are $1,2$ then it is $k_3+1/3$; if they are $1,1$ then it is $k_4$. We add all these numbers, then multiply by two for the two possible labels on $F_1$. We get $k+2/3$ for some integer $k$.
By Lemma \ref{lem:constistency-degree}, $\sum_{\ell\sim L} \bdeg(\ell) = 3\cdot \sum_{\ell\sim L} \bdeg(\ell[F_{-1}]) = 3 k+2$.

In all cases, $\sum_{l\sim L} \bdeg(\ell)
\not\equiv 0~\operatorname{mod}~3$. Hence, there is at least one $\ell\sim L$ with $\bdeg(\ell)
\not\equiv 0~\operatorname{mod}~3$, as claimed.
\end{proof}
\subsection{Tying the knots}
The final theorem of this section ties the knots.
\begin{theorem}
\vskip -.2cm
\label{thm:main}
For $n=3$ selective agents, there always exists a connected envy-free division of a mixed cake.
\end{theorem}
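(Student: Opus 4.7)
The plan is to assemble the lemmas developed in Sections \ref{sub:permutation}--\ref{sub:permutation-degree} and then pass to a limit by a standard compactness argument. Fix $\delta > 0$. By Lemma \ref{lem:friendly-and-diverse}, choose a friendly triangulation $T$ of $\Delta^{2}$ with every sub-simplex of diameter at most $\delta$, together with a friendly and diverse ownership-assignment $W: \vrt(T) \to \{A_1,A_2,A_3\}$. Each selective agent $A_i$ induces a multi-valued labeling $L_i: \vrt(T)\to 2^{[3]}$ via its preference function $s_i$; each $L_i$ is automatically consistent, since consistency just encodes the requirement that an agent select the same physical piece regardless of how empty pieces are indexed in the simplex of partitions. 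Form the combined labeling $L^W$ as in Section \ref{sub:combining}. By Lemma \ref{lem:friendly-ownership}, $L^W$ is consistent.

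Now apply Lemma \ref{lem:permutation-degree} (the unique step that actually uses $n=3$) to obtain an induced single-valued labeling $\ell \sim L^W$ with $\bdeg(\ell) \not\equiv 0 \pmod 3$, and in particular $\bdeg(\ell) \neq 0$. The Degree Lemma (Lemma \ref{lem:degree-T}) then gives $\ideg(\ell) = \bdeg(\ell) \neq 0$, so at least one sub-simplex $s \in T$ is fully labeled by $\ell$. Because $W$ is diverse, the three vertices of $s$ are owned by three distinct agents; after relabeling, write them as $t_1, t_2, t_3$ with $W(t_i)=A_i$. Then $\ell(t_i) \in L^W(t_i) = L_i(t_i)$ and the three $\ell(t_i)$ are distinct, so by Definition \ref{def:ef-simplex} the sub-simplex $s$ is an envy-free simplex. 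Its diameter being at most $\delta$, the associated cake-partition is a $\delta$-envy-free division.

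To pass from $\delta$-envy-freeness to exact envy-freeness, take $\delta_m = 1/m$ for $m = 1, 2, \ldots$ and carry out the above construction for each $m$. This yields a sequence of envy-free simplices, each equipped with a bijection from vertices to agents. Since $\Delta^{2}$ is compact and there are only finitely many bijections $\{A_1,A_2,A_3\}\to[3]$, a subsequence of the centroids converges to a single partition $x^* \in \Delta^{2}$ under one fixed bijection. The three vertices of the $m$-th simplex also converge to $x^*$ since their diameters vanish. The continuity of each $s_i$ (the closed-set condition built into the selective-agent model) then lifts the relation ``$A_i$ prefers piece $\ell(t_i)$'' to the limit, producing an exact connected envy-free division at $x^*$.

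The main obstacle in the whole program has already been surmounted inside Lemma \ref{lem:permutation-degree}: getting past the failure of Sperner's boundary condition in the mixed-cake regime and proving that a consistent labeling nevertheless forces a nonzero boundary degree. Once Lemmas \ref{lem:friendly-and-diverse}, \ref{lem:friendly-ownership}, \ref{lem:degree-T} and \ref{lem:permutation-degree} are in hand, Theorem \ref{thm:main} follows essentially mechanically, the only remaining subtlety being the routine compactness argument that extracts an exact division from a vanishing sequence of approximate ones.
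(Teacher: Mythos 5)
Your proposal is correct and follows essentially the same route as the paper's own proof: combine Lemmas \ref{lem:friendly-and-diverse}, \ref{lem:friendly-ownership}, \ref{lem:permutation-degree} and \ref{lem:degree-T} to extract a fully-labeled (hence envy-free) sub-simplex at every scale $\delta$, then pass to the limit by compactness and the continuity of the $s_i$. The only difference is that you spell out the final compactness argument in more detail (fixing a bijection via the pigeonhole principle on the finitely many assignments and noting that all three vertices converge to the same limit point because the diameters vanish), which the paper compresses into a single sentence.
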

\begin{proof}
\vskip -.2cm
Let $T$ be a barycentric triangulation of the partition-simplex $\Delta^{n-1}$.
Let $W$ be a friendly and diverse ownership-assignment on $T$, which exists by 
\S\ref{sub:combining}.
Ask each agent to label the vertices he owns by the indices of his preferred pieces.

All $n$ labelings $L_1,\ldots,L_n$ are consistent (\S\ref{sub:permutation}).
Since $W$ is friendly, 
$L^W$ is consistent too
(\S\ref{sub:combining})%
. 
Therefore, there exists 
a single-valued labeling,
$\ell^W \sim L^W$, having 
$\bdeg(\ell^W)\neq 0$
(\S\ref{sub:permutation-degree}).
By the Degree Lemma (\S\ref{sub:degree}),
$\ideg(\ell^W)\neq 0$ too. Therefore $\ell^W$ has at least one fully-labeled simplex.
Since $W$ is diverse, a fully-labeled simplex of $\ell^W$ is an envy-free simplex.

All of the above can be done for finer and finer barycentric triangulations. This yields an infinite sequence of envy-free simplices. This sequence has a convergent subsequence. By continuity of  preferences, the limit of this subsequence is an envy-free division.
\end{proof}

We could not extend Lemma \ref{lem:permutation-degree} to $n>3$; it is left as a conjecture.
\begin{conjecture}
\label{cnj:perm-deg}
Let $L: \vrt(T)\to 2^{[n]}$ be a consistent labeling of a friendly triangulation of $\Delta^{n-1}$.
Then, $L$ induces a single-valued labeling $\ell:\vrt(T)\to [n]$ with:
\begin{align*}
\bdeg(\ell)
\not\equiv 0~\operatorname{mod}~n
\end{align*}
\end{conjecture}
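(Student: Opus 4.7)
The plan is to generalize the mod-$3$ computation of Lemma~\ref{lem:permutation-degree}. By Lemma~\ref{lem:constistency-degree},
\begin{align*}
\sum_{\ell \sim L}\bdeg(\ell) \;=\; n \cdot \sum_{\ell \sim L}\bdeg(\ell[F_{-1}]),
\end{align*}
so to produce a single $\ell\sim L$ with $\bdeg(\ell)\not\equiv 0\bmod n$ it suffices to show that the right-hand side is not divisible by $n$, i.e.\ that $S := \sum_{\ell\sim L}\bdeg(\ell[F_{-1}])$ is not an integer. Since each top-dimensional sub-simplex $t\in T(F_{-1})$ contributes $\pm\tfrac{1}{n}$ to $\bdeg(\ell[F_{-1}])$ when its $n-1$ vertex-labels are distinct elements of $[n]$ (and $0$ otherwise), $nS$ equals a signed count of ``fully-labeled-on-$F_{-1}$'' sub-simplices, and the target becomes to show that this count is nonzero modulo~$n$.

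The next step is a case analysis on the labels at the main vertices $F_2,\dots,F_n$ lying on $F_{-1}$. By Lemma~\ref{lem:zeros}, each such $F_j$ is either of \emph{positive type} with $L(F_j)=\{j\}$ or of \emph{negative type} with $L(F_j)=[n]\setminus\{j\}$. Using consistency one can pre-process the labeling on the remaining boundary vertices: for every non-main vertex $x$ that is not in the interior, arbitrary label-removal is propagated to all friends of $x$ via the permutations $\perm{k}$, leaving $L$ still consistent. This confines the multiplicity of induced labelings $\ell\sim L$ to the negative main vertices; if $m$ of the $n$ main vertices are negative, there are $(n-1)^{m}$ induced labelings contributed by main vertices, each multiplying the label sets $[n]\setminus\{j\}$.

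The core computation is to evaluate $nS \bmod n$ in each type-configuration. For each subset $J\subseteq[n]$ with $|J|=n-1$, index by the omitted label $i\in[n]$ the signed count $C_i(\ell)$ of top-dimensional sub-simplices on $F_{-1}$ carrying exactly the label set $J=[n]\setminus\{i\}$; then $nS=\sum_{\ell\sim L}\sum_i \pm C_i(\ell)$. The natural next move is induction on $n$: view $\ell$ restricted to $F_{-1}$ as a labeling of an $(n-2)$-dimensional simplex, use the Degree Lemma (applied face by face of $Q$) to push $C_i(\ell)$ to a boundary-degree computation on $\partial F_{-1}$, and exploit the fact that $\partial F_{-1}$ is itself a union of $(n-1)$ faces of codimension~$2$, each of which carries a consistent labeling inherited via the friend-permutations. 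If one can show that the inherited labeling satisfies a structurally similar ``consistency'' condition on $F_{-1}$ (with the role of the empty coordinate played by the missing label $1$), then an inductive hypothesis would supply non-vanishing mod something, which after a careful accounting of signs from $\perm{k}$ would give $nS\not\equiv 0\bmod n$.

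The main obstacle is exactly the step where the $n=3$ argument becomes trivial but the general case does not. For $n=3$, $F_{-1}$ is one-dimensional and $\bdeg(\ell[F_{-1}])$ reduces to a cyclic count along a single edge between $F_2$ and $F_3$, which forces a nonzero fractional part by the simple observation that the endpoint labels differ. For $n>3$, $F_{-1}$ is higher-dimensional, the inherited consistency condition involves nontrivial coordinate permutations on $(n-2)$-dimensional faces, and the signed count of fully-labeled sub-simplices admits cancellations that depend delicately on the arithmetic of $n$. This is precisely where a purely combinatorial recursion appears to break down, and where one likely needs to invoke a $\mathbb{Z}_p$-equivariant triangulation together with a mod-$p$ generalization of Sperner's lemma (as in the approach of \citet{meunier2018envy} for prime $n$). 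I would therefore pursue the inductive reduction above as far as it will go, but expect the final non-vanishing step to require an equivariant-topology input; for general composite $n$ a new idea beyond mod-$p$ methods seems necessary, consistent with the conjecture's status as open.
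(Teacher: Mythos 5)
You correctly recognize that this statement is a conjecture that the paper leaves open, and your honest bottom line matches the paper's. Where you diverge from the paper's partial progress (Section~\ref{sec:special-case}) is in the proposed inductive reduction. You want to view $\ell[F_{-1}]$ as a labeling of an $(n-2)$-simplex that inherits a consistency condition \emph{with the missing label $1$ playing the role of a zero coordinate}, so an inductive hypothesis can bite. That step does not go through: vertices of $F_{-1}$ can still carry label $1$ (any vertex whose owner prefers the unique empty piece in that partition), so $\ell[F_{-1}]$ lives over the full alphabet $[n]$, not $[n-1]$; and the consistency of Definition~\ref{def:permutation} is a relation between geometric \emph{friends} (points with the same ordered sequence of nonzero coordinates), not between \emph{labels}, so there is no coherent way to transplant it to $F_{-1}$ by letting a label impersonate a coordinate. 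There is simply no ``consistency on $F_{-1}$'' for an inductive hypothesis to apply to.

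The paper's actual route for the special cases is different and sidesteps this obstacle. It introduces the label-replacement $\ell_{n\to j}$ (replace every occurrence of label $n$ by $j$), which genuinely collapses $\ell[F_{-n}]$ to an $[n-1]$-labeling. Lemma~\ref{lem:sumreplace} gives $\sum_{\ell\sim L}\bdeg(\ell)\equiv -\sum_{j<n}\sum_{\ell\sim L}\bdeg(\ell_{n\to j}) \pmod{1}$, and with Lemma~\ref{lem:constistency-degree} this becomes $\sum_{\ell\sim L}\bdeg(\ell)\equiv -\sum_{j<n}\sum_{\ell\sim L}\bdeg_{n-1}(\ell_{n\to j}[F_{-n}]) \pmod{n}$ (Lemma~\ref{lem:sumreplace2}). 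The replaced labelings are then amenable to Sperner's boundary condition (Lemma~\ref{lem:sperner1}) plus a careful signed count, yielding $\sum_{\ell\sim L}\bdeg(\ell)\equiv K\pmod{n}$ with $K$ a product of integers from $[n-1]$ (Lemma~\ref{lem:Kmodn}); this is nonzero when $n$ is prime (Lemma~\ref{lem:prime}). Your appeal to $\mathbb{Z}_p$-equivariant Sperner theory describes Meunier--Zerbib's proof rather than the paper's elementary argument, and --- as you correctly note --- both leave the composite case open.
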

If this conjecture is true, then Theorem \ref{thm:main} is true for any $n$.
\ifdefined\AAMAS
\footnote{
The difficulty comes from the need to handle multiple labels on the same vertex. 
With $n=3$ this is needed only in the three main vertices, where an agent who prefers an empty piece puts two labels. In Lemma \ref{lem:permutation-degree} Negative case, we explain why we cannot just pick 
one option arbitrarily, and handle this by summing over all $2^3$ options.
However, with $n\geq 4$, many more vertices can have multiple labels.
For example, on the one-dimensional face that connects $F_1$ and $F_2$,
every piece $k\geq 3$ is empty, so in \emph{any} vertex on that line, 
an agent who prefers an empty piece puts multiple labels.
The number of multi-label vertices is not fixed, so our proof technique does not work.
}
\fi
Section \ref{sec:special-case}
provides some evidence for the correctness of the conjecture by proving it in two special cases: (a) when Sperner's boundary condition holds for all faces with $n-2$ vertices or less, (b) when $n$ is a prime number.

\section{Finding an Envy-Free Division}
\label{sec:finding}
\citet{Stromquist2008Envyfree} proved that connected envy-free allocations cannot be found in a finite number of queries even when all valuations are positive, so the best we can hope for is an approximation algorithm.

The following simple binary-search algorithm can be used to find a fully-labeled sub-simplex in a labeled triangulation. It is adapted from \citet{Deng2012Algorithmic}:
\begin{enumerate}
\item If the triangulation is trivial (contains one sub-simplex), stop.
\item Divide the simplex into two halves, respecting the triangulation lines.
Calculate the boundary degree in each half.
\item Select one half in which the boundary degree is non-zero; perform the search recursively in this half.
\end{enumerate}
While \citet{Deng2012Algorithmic} present this algorithm for the positive case, it works whenever the boundary degree of the original simplex is non-zero. Then, in step 3, the boundary degree of at least one of the two halves is non-zero, so the algorithm goes on until it terminates with a fully-labeled simplex. This is the case when there are $n=3$ agents with arbitrary mixed valuations (Lemma \ref{lem:permutation-degree}).
If Conjecture \ref{cnj:perm-deg} is true, then this is also the case for any $n$.

To calculate the runtime of the binary search algorithm, 
suppose the triangulation is such that each side of the original simplex is divided into $D$ intervals. Then, the runtime complexity of finding a fully-labeled simplex is $O(D^{n-2})$ \citep{Deng2011Discrete}.

To calculate the complexity of finding a $\delta$-approximate envy-free allocation, we have to relate $D$ to $\delta$. 
In each barycentric subdivision, the diameter of the subsimplices is at most $n/(n+1)$ the diameter of the original simplex \citep{Munkres1996Elements}.
Hence, to get a barycentric triangulation in which the diameter of each sub-simplex is at most $\delta$, it is sufficient to perform
\ifdefined\FULLVERSION
$k$ steps of barycentric subdivision, where $k$ satisfies the inequality:
\begin{align*}
\bigg({n\over n+1}\bigg)^k &\leq \delta
\\
k &\geq {\ln \delta\over \ln{n\over n+1}}
  =   {\ln (1/\delta)\over \ln{(1+1/n)}}
 \\
 &\approx \ln(1/\delta)/(1/n) = n\ln(1/\delta).
\end{align*}
\else
$k \approx n\ln(1/\delta)$ steps of barycentric subdivision.
\fi
In each step, the number of intervals in each side is doubled, so $D \in \Theta (2^k) = \Theta({1/\delta}^n)$.
So the total runtime complexity of finding a $\delta$-approximate envy-free allocation using the barycentric triangulation is $O(1/\delta^{n(n-2)})$.%
\ifdefined\FULLVERSION
\footnote{
Note that $\delta$ is an additive approximation factor to the \emph{location} of the borderlines.
\cite{Deng2012Algorithmic} also consider an additive approximation to the \emph{values} (e.g., each agent values another agent's piece at most $\epsilon$ more than his own piece). To relate the $\delta$-factor to the $\epsilon$-factor, they add an assumption that the valuation functions are Lipschitz continuous with a constant factor.
}
\fi

\citet{Deng2012Algorithmic} note the slow convergence of the barycentric triangulation, and propose to use the \emph{Kuhn triangulation} instead. This triangulation looks similar to the equilateral triangulations shown in Figure \ref{fig:triangulation-ownership}. In this triangulation, $D = 1/\delta$ so the runtime complexity of the binary search is $O(1/\delta^{n-2})$. They prove that this is the best possible for selective agents. However, their triangulation does not support a diverse and friendly ownership-assignment. 

For $n=3$, we found a variant of the equilateral triangulation that does support a diverse and 
friendly ownership-assignment. The first two steps of this triangulation are illustrated below:
\begin{center}
\includegraphics[width=.4\columnwidth]{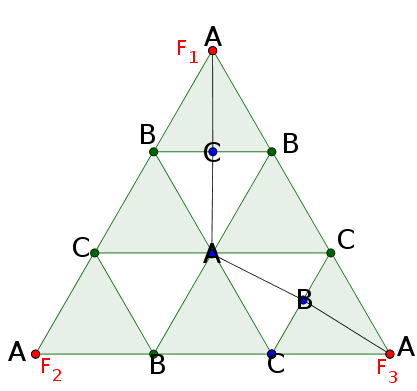}
\hskip .06\columnwidth
\includegraphics[width=.4\columnwidth]{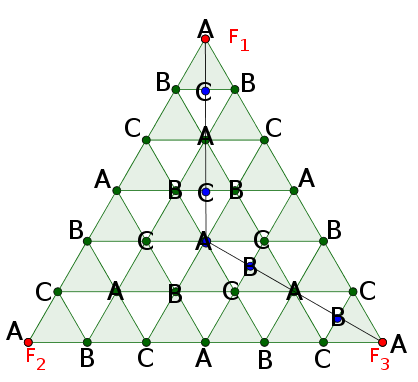}
\end{center}
So for $n=3$, a $\delta$-envy-free division can be found in time $O(1/\delta)$. 
Generalizing this ``trick'' to $n>3$ is left for future work.

\ifdefined\FULLVERSION
\section{Towards a General Solution}
\label{sec:special-case}
\newcommand{\lreplacen}[1]{\ell_{n\to #1}}
\newcommand{\Lreplacen}[1]{L_{n\to #1}}
This section presents preliminary results proving Conjecture \ref{cnj:perm-deg} in some special cases:
\begin{itemize}
\item When Sperner's boundary conditions are satisfied for all faces with at most $n-2$ vertices;
\item When $n$ is prime.
\end{itemize}
The latter result was previously proved by
\citet{meunier2018envy}.
The proof of Lemma \ref{lem:prime} below is inspired by their proof but uses more elementary arguments.

For both cases, we need the following lemma that relates Sperner's boundary condition to the boundary degree. It was proved before using different arguments, e.g. by \citet{frick2017achieving}

\begin{lemma}
\label{lem:sperner1}
Let $\ell:\vrt(T)\to[n]$ be a labeling of a triangulation of  $\Delta^{n-1}$.

Suppose that, for every $J\subseteq [n]$, all vertices on the face $F_J$ are only labeled with labels from $J$ (this is Sperner's boundary condition).

Then the boundary degree of $\ell$, w.r.t. a reference simplex $Q$ with the same orientation as $\Delta^{n-1}$, is one:
\begin{align*}
\bdeg(\ell) = 1
\end{align*}
\end{lemma}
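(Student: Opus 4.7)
The plan is to prove Lemma \ref{lem:sperner1} by induction on $n$, reducing the claim for $\Delta^{n-1}$ to the claim for $\Delta^{n-2}$ via restriction to the codimension-one faces $F_{-k}$. The base case $n=2$ is direct: Sperner's condition forces $\ell(F_1)=1$ and $\ell(F_2)=2$, and evaluating the definition \eqref{eq:average-degree} at each of the two boundary $0$-simplices gives $\bdeg(\ell)=\tfrac{1}{2}+\tfrac{1}{2}=1$.

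For the inductive step, the key observation is that every $(n-2)$-dimensional boundary sub-simplex $t'\in\partial T$ lies in exactly one face $F_{-k}$, and Sperner's condition restricts its labels to $[n]\setminus\{k\}$. Therefore, in the sum \eqref{eq:average-degree} defining $\deg(g_{\ell,t'})$, every term $\deg(g_{\ell,t'},Q')$ with $Q'\neq Q_{-k}$ is automatically zero, since the affine map cannot be onto a face whose label set is incompatible with the labels of $t'$. Regrouping by $k$,
\begin{align*}
\bdeg(\ell)=\frac{1}{n}\sum_{k=1}^{n}\sum_{t'\in T(F_{-k})}\deg(g_{\ell,t'},Q_{-k})=\frac{1}{n}\sum_{k=1}^{n}\ideg\bigl(\ell[F_{-k}]\bigr),
\end{align*}
where $\ell[F_{-k}]$ is viewed as a labeling of the $(n-2)$-simplex $F_{-k}$ whose labels (in $[n]\setminus\{k\}$) are reindexed to $[n-1]$ via the natural order-preserving bijection, mapped to the reference simplex $Q_{-k}$.

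Next I would apply the induction hypothesis together with the Degree Lemma one dimension lower. Sperner's condition for $\ell$ on $\Delta^{n-1}$ immediately implies Sperner's condition for $\ell[F_{-k}]$ on the sub-faces of $F_{-k}$, so the inductive hypothesis yields $\bdeg(\ell[F_{-k}])=1$; Lemma \ref{lem:degree-T} applied in dimension $n-2$ then gives $\ideg(\ell[F_{-k}])=1$. Substituting into the previous display,
\begin{align*}
\bdeg(\ell)=\frac{1}{n}\cdot n\cdot 1=1,
\end{align*}
which completes the induction.

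The main obstacle is orientation bookkeeping: one must verify that the inward orientation of $F_{-k}$ inherited from $\Delta^{n-1}$ pushes forward under the affine identification to the inward orientation of $Q_{-k}$ inherited from $Q$, so that each $\ideg(\ell[F_{-k}])$ really is $+1$ rather than $-1$, and the $n$ terms add with matching signs. This should follow from the hypothesis that $Q$ and $\Delta^{n-1}$ share the same orientation, so that corresponding faces inherit compatible orientations uniformly in $k$; but the argument needs to be written out carefully, since a sign flip in even a single face would destroy the conclusion.
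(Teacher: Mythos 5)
Your proposal is correct and follows essentially the same route as the paper's proof: induction on $n$, with the inductive step expressing $\bdeg(\ell)$ as $\tfrac{1}{n}$ times the sum over faces $F_{-k}$ of the interior degree of the restricted labeling, which the Degree Lemma and the induction hypothesis force to equal $1$ each. The paper argues this for $F_{-n}$ and then appeals to symmetry for the other faces (and uses $n=3$ as its base case instead of your $n=2$); you write out the regrouping uniformly and explicitly flag the orientation bookkeeping that the paper leaves implicit, but the underlying argument is the same.
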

Note that, by the Degree Lemma, this implies 
$\ideg(\ell) = 1$, which implies 
that the number of fully-labeled simplices must be odd. So Sperner's lemma can be seen as a special case of the Degree lemma.
\begin{proof}[Proof of Lemma \ref{lem:sperner1}]
The proof is by induction on $n$.

\textbf{Base: $n=3$}. By Sperner's condition,
the face $F_{12}$ is  labeled with only $1$ and $2$. One end of this face is labeled $1$ and the other end $2$, so the number of $12$ edges is one plus the number of $21$ edges. Therefore this face contributes $1/3$ to the boundary degree.
The same considerations are true for the faces $F_{23},F_{31}$. Therefore, $\bdeg(\ell)=3\cdot 1/3 = 1$.

\textbf{Step:} We assume the lemma is true for $n-1$ and prove it for $n$. 
By Sperner's condition, the face $F_{-n}$ is labeled only by labels from $[n-1]$. Let $\ell':F_{-n}\to [n-1]$ be the restriction of $\ell$ to the face $F_{-n}$.
Now, $F_{-n}$ is an $n-2$-dimensional simplex labeled by a labeling $\ell'$ that satisfies Sperner's boundary condition. 
By the induction assumption, the boundary degree of $\ell'$ is $1$. Therefore, by the Degree Lemma, $\ideg(\ell')=1$. 
So 
the net number of sub-simplices labeled with $n-1$ distinct labels from $[n-1]$ on $F_{-n}$ (positive minus negative) is 1. Therefore, $F_{-n}$ contributes $1/n$ to $\bdeg(\ell)$.
Exactly the same considerations, with only renaming of the labels, are true for all $n$ faces of $\Delta^{n-1}$. Hence, $\bdeg(\ell) = n\cdot 1/n = 1$.
\end{proof}

\newcommand{\multiset}[1]{\mathbb{N}^{[#1]}}

For the following results, we need a way to convert a labeling
with labels from $[n]$ to a labeling with labels only from $[n-1]$.
For this, we need to generalize the definition of a (multi-valued) labeling so that it can return a multi-set of labels rather than just a set. We denote the collection of multi-sets of labels from $[n]$ by $\multiset{n}$.

Let $L:\vrt(T)\to \multiset{n}$ be a labeling that assigns to each vertex of $T$ a multi-set of labels from $[n]$.
For every $j\in n$, we define $\Lreplacen{j}$ as a labeling obtained from $L$ by replacing each occurrence of $n$ by $j$ (we use multi-sets to ensure that labels are not merged during the replacement).
Obviously $\Lreplacen{n}\equiv L$, and for every $j<n$, $\Lreplacen{j}$ is into $\multiset{n-1}$.
We define $\lreplacen{j}$ analogously for single-valued labelings.

The following lemma reduces a labeling into $\multiset{n}$, to a function of labelings into $\multiset{n-1}$:

\begin{lemma}
\label{lem:sumreplace}
For every labeling $L:\vrt(T)\to\multiset{n}$, 
even when restricted to a part of $\Delta^{n-1}$ (e.g. a single face):
\begin{align*}
\sum_{\ell\sim L}
\bdeg(\ell) 
\equiv
 - \sum_{j=1}^{n-1} 
\sum_{\ell\sim  L}
\bdeg(\lreplacen{j})
\mod{1}
\end{align*}
(``$\equiv \mod{1}$'' means that the difference is a whole number).
\end{lemma}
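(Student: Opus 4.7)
The plan is to reduce the claimed congruence to a per-simplex local fact. Expanding both $\bdeg(\cdot)$'s as sums over $(n-2)$-dimensional boundary simplices $t$ and interchanging summations, it suffices to show that for every single-valued $\ell\sim L$ and every such $t$,
\[
\deg(g_{\ell,t}) + \sum_{j=1}^{n-1} \deg(g_{\lreplacen{j},t}) \in \mathbb{Z}.
\]
Since integrality is preserved under summation, this yields the global congruence. The argument is entirely local on each individual $t$, so it transfers verbatim to any subregion of $\Delta^{n-1}$, covering the ``restricted to a part'' strengthening in the statement.

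I would verify the per-$t$ fact by case analysis on the multiset of labels $\ell$ assigns to the $n-1$ vertices of $t$. If $\ell$ uses fewer than $n-1$ distinct labels, every $\lreplacen{j}$ also has a repeat, so all terms vanish. If $\ell$ uses $n-1$ distinct labels drawn from $[n-1]$ (so label $n$ does not appear), then $\lreplacen{j}=\ell$ for every $j$, and the total is $n\cdot(\pm 1/n)=\pm 1$. Finally, if $\ell$ uses labels $\{a_1,\ldots,a_{n-2},n\}$ with $\{a_1,\ldots,a_{n-2}\}=[n-1]\setminus\{k\}$, then only $\lreplacen{k}$ is itself fully labeled (with $[n-1]$), all other $\lreplacen{j}$ have a repeat and contribute $0$, and the key claim is that $\deg(g_{\lreplacen{k},t})=-\deg(g_{\ell,t})$, producing a perfect cancellation.

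The sign flip in the last case is where the real work lies, and I expect it to be the main obstacle. My approach is to write $g_{\lreplacen{k}}=\tau_{nk}\circ g_\ell$, where $\tau_{nk}:Q\to Q$ is the transposition swapping $Q_n$ and $Q_k$; the two affine maps agree on the vertices of $t$ and therefore everywhere. Being a single reflection, $\tau_{nk}$ has degree $-1$ as a self-map of $Q$, and since it preserves the interior of $Q$ it carries the inward orientation at $Q_{-k}$ to the inward orientation at $Q_{-n}$. Decomposing the differential locally into tangential and inward-normal components, the normal contribution is $+1$, which forces the induced map $\tau_{nk}:Q_{-k}\to Q_{-n}$ between the oriented $(n-2)$-faces to carry degree $-1$. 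Multiplicativity of the degree under composition then gives $\deg(g_{\lreplacen{k}},Q_{-n})=-\deg(g_\ell,Q_{-k})$, which is the only nonzero summand in either averaged degree, so the cancellation claim follows. The delicate point is precisely this orientation bookkeeping: one must verify carefully that the inward-pointing half-space conventions used to define $\deg(g_\ell,Q')$ are compatible with composition by global affine self-maps of $Q$, so that signs combine as expected.
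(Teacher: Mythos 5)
Your proof is correct and follows essentially the same case-analysis-by-label-multiset argument as the paper, just organized per boundary sub-simplex $t$ rather than via the paper's $\#_{-i}$ counting notation (both decompositions count the same contributions: $[n-1]$-labeled faces give $\pm 1$, faces labeled $[n]\setminus\{k\}$ with $k<n$ cancel, all others vanish). You also supply a fuller justification, via the transposition $\tau_{nk}$ and the inward-orientation bookkeeping, of the sign-flip that the paper merely asserts with ``the sign of the sub-simplex is opposite.''
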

\begin{proof}
We first prove the lemma for a single-valued labeling $\ell$. 
Denote by $\#_{-i}(\ell)$, the net number of sub-simplices on the boundary of $\Delta^{n-1}$, that are labeled with the $n-1$ distinct labels $[n]\setminus \{i\}$ (``net'' means positively-oriented minus negatively-oriented). Each such sub-simplex contributes $1/n$ to $\bdeg(\ell)$, so 
$
\bdeg(\ell) 
= {1\over n}\sum_{i\in [n]} 
\#_{-i}(\ell)
$.

We separate this sum to two terms:
$
{1\over n}\cdot \#_{-n}(\ell)
+
{1\over n}\sum_{i\in [n-1]} 
\#_{-i}(\ell)
$.
We sum each term over all $n$ replacements of $\ell$:
\begin{align*}
\sum_{j=1}^n \bdeg(\lreplacen{j})
=
{1\over n}\cdot \sum_{j=1}^n \#_{-n}(\lreplacen{j})
+
{1\over n} \sum_{j=1}^n  \sum_{i\in [n-1]} 
\#_{-i}(\lreplacen{j})
\end{align*}
Consider the two terms in the right-hand side. 

The term containing $\#_{-n}$ counts sub-simplices labeled with labels in $[n-1]$. Such sub-simplices are not affected by replacing the label $n$. Therefore $\#_{-n}(\lreplacen{j})$ is independent of $j$, so the leftmost term in the sum is ${1\over n}\cdot n\cdot \#_{-n}(\ell) = \#_{-n}(\ell)$. This is a whole number.

The term containing $\#_{-i}$ 
counts sub-simplices with labels in $[n]\setminus \{i\}$, where $n$ is replaced by $j$. 
Such sub-simplices have non-zero contribution 
only when $j=n$ (the label $n$ is not replaced) and when $j=i$ (the label $n$ is replaced with the missing label $i$), since these are the only cases where the sub-simplex remains with $n-1$ distinct labels. In these two cases, the sign of the sub-simplex is opposite. Therefore, the entire rightmost term in the sum is zero. We conclude that  $\sum_{j=1}^n \bdeg(\lreplacen{j})=\#_{-n}(\ell)$ --- a whole number. 

We separate this sum to $\bdeg(\ell) + 
\sum_{j=1}^{n-1} \bdeg(\lreplacen{j})$.
This sum is a whole number, so it equals 0  modulu 1, so $\bdeg(\ell) = -\sum_{j=1}^{n-1} \bdeg(\lreplacen{j}) \mod{1}$.
The lemma now follows by just summing each side of this equality over all $\ell \sim L$.
\end{proof}

Now, we apply Lemma \ref{lem:sumreplace} for the face $F_{-n}$.
This is an $n-2$-dimensional face, and each of the $\Lreplacen{j}$ for $j<n$ is a labeling of this face with labels from $[n-1]$.
Each of these labelings has an inner degree, which is the net number of sub-simplices labeled with $[n-1]$. 
By the Degree Lemma, its inner degree equals its boundary degree --- calculated over the $n-1$ faces of $F_{-n}$.
We denote the latter boundary degree by the operator $\bdeg_{n-1}$, to emphasize 
that it is calculated on a boundary with $n-1$ faces (of dimension $n-3$), in contrast to $\bdeg$, which is calculated on a boundary with $n$ faces (of dimension $n-2$).

The following lemma relates the boundary-degree of a labeling on the entire $n$-vertex simplex, to its boundary-degree on a single $n-1$-vertex face.

\begin{figure*}
\begin{center}
\includegraphics[width=.23\textwidth]{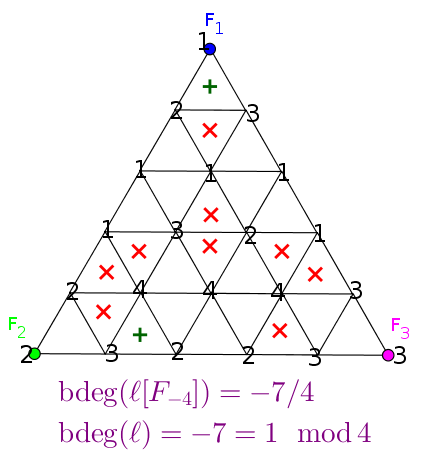}
\includegraphics[width=.21\textwidth]{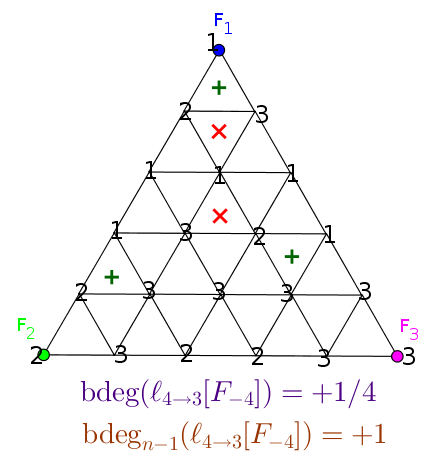}
\includegraphics[width=.21\textwidth]{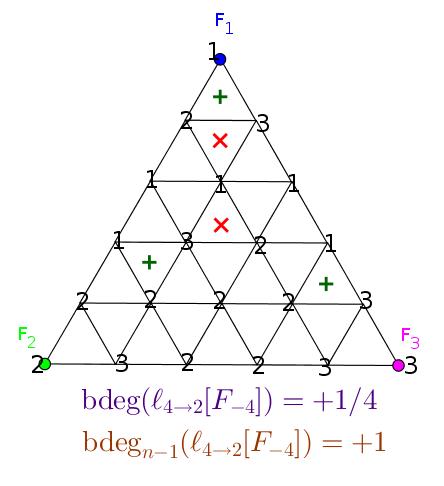}
\includegraphics[width=.21\textwidth]{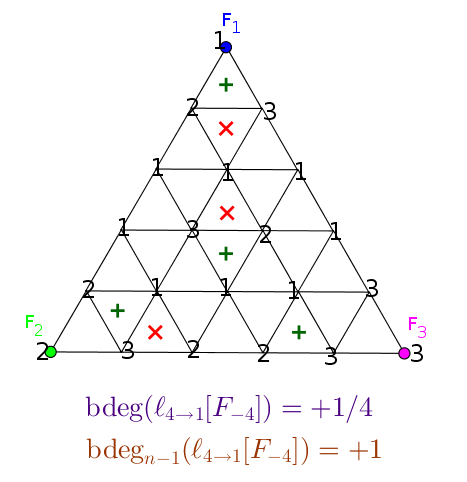}
\includegraphics[width=.08\textwidth]{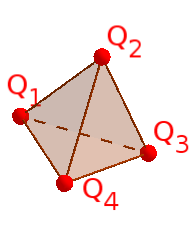}
\end{center}
\caption{
\label{fig:4agents-sperner}
\textbf{Left}: A consistent labeling $\ell$ on the face $F_{-4}$ of $\Delta^{4-1}$.
It satisfies the conditions of Lemma \ref{lem:sperner2} since it satisfies Sperner's boundary condition on the faces with at most $n-2$ vertices.
\\
\textbf{Middle}: 
The three labelings $\lreplacen{j}$ for $j\in [3]$.
Each of these labelings satisfies the conditions 
of Lemma \ref{lem:sperner1} since it satisfies Sperner's boundary condition.
\\
Each subsimplex with  ``+'' is positively  oriented and contributes $+1/4$ to the boundary degree; each subsimplex with ``X'' is negatively oriented and contributes $-1/4$. 
\\
\textbf{Right}:
The reference simplex $Q$.
}
\end{figure*}

\begin{lemma}
\label{lem:sumreplace2}
Let $L: \vrt(T)\to \multiset{n}$ be a consistent labeling of a friendly triangulation of $\Delta^{n-1}$.  Then:
\begin{align*}
\sum_{\ell\sim L}
\bdeg(\ell) 
\equiv
 - \sum_{j=1}^{n-1}
 \sum_{\ell\sim L} \bdeg_{n-1}(\lreplacen{j}[F_{-n}]) \mod{n}
\end{align*}
\end{lemma}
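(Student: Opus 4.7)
The plan is to chain together three results already in hand: the face-symmetry of consistent labelings (Lemma \ref{lem:constistency-degree}), the replacement identity (Lemma \ref{lem:sumreplace}) applied to a single face, and the Degree Lemma applied \emph{inside} that face viewed as a standalone $(n-2)$-dimensional simplex.

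First I would observe that the proof of Lemma \ref{lem:constistency-degree} uses nothing special about $F_{-1}$: the consistency-plus-friendliness argument shows that each face $F_{-k}$ contributes the same amount to $\sum_{\ell\sim L}\bdeg(\ell)$, since moving coordinate $k$ to position $1$ changes the orientation and the label-permutation $\perm{k}$ by matching signs. Hence for any $k\in[n]$, and in particular $k=n$,
\begin{align*}
\sum_{\ell\sim L}\bdeg(\ell) \;=\; n\cdot \sum_{\ell\sim L}\bdeg(\ell[F_{-n}]).
\end{align*}
Multiplying by $n$ will be the key leverage: any mod-$1$ congruence between two sums over $F_{-n}$ becomes a mod-$n$ congruence after multiplication.

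Second, I would apply Lemma \ref{lem:sumreplace} to the restriction $L[F_{-n}]$, which is allowed since the lemma is stated ``even when restricted to a part of $\Delta^{n-1}$''. This gives
\begin{align*}
\sum_{\ell\sim L}\bdeg(\ell[F_{-n}]) \;\equiv\; -\sum_{j=1}^{n-1}\sum_{\ell\sim L}\bdeg(\lreplacen{j}[F_{-n}]) \pmod{1}.
\end{align*}

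Third, I would convert the ambient partial boundary degree $\bdeg(\lreplacen{j}[F_{-n}])$ into the intrinsic boundary degree $\bdeg_{n-1}(\lreplacen{j}[F_{-n}])$. Since $j<n$, the labeling $\lreplacen{j}$ uses only labels from $[n-1]$, so the only top-dimensional sub-simplices of $F_{-n}$ that carry $n-1$ distinct labels are those labeled with exactly the set $[n-1]$. Each such sub-simplex contributes $\pm 1/n$ to $\bdeg(\lreplacen{j}[F_{-n}])$ (one matching face of the reference $Q$) and, with the orientation inherited from the inwards-pointing normal of $F_{-n}$ in $\Delta^{n-1}$, the same $\pm 1$ to the interior degree $\ideg_{n-1}(\lreplacen{j}[F_{-n}])$ of $F_{-n}$ treated as a standalone $(n-2)$-simplex with labels in $[n-1]$. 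Thus $n\cdot \bdeg(\lreplacen{j}[F_{-n}]) = \ideg_{n-1}(\lreplacen{j}[F_{-n}])$, and by the Degree Lemma applied on the triangulation of $F_{-n}$ this equals $\bdeg_{n-1}(\lreplacen{j}[F_{-n}])$.

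Putting the three steps together: multiply the second congruence by $n$ to lift it from mod $1$ to mod $n$, substitute the first equality on the left and the third equality on the right, and the lemma drops out. The only real obstacle is the sign bookkeeping in the third step: I need the orientation that $F_{-n}$ inherits as a face of $\Delta^{n-1}$ (used implicitly in $\bdeg(\cdot[F_{-n}])$) to agree with the orientation used when setting up $\bdeg_{n-1}$, so that the conversion factor is exactly $+1/n$ and not $-1/n$. If the two orientations disagree I would absorb the sign into the choice of reference simplex for $\bdeg_{n-1}$, which is an innocuous global convention that affects neither the Degree Lemma on $F_{-n}$ nor the mod-$n$ statement of the conclusion.
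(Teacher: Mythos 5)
Your proposal matches the paper's own proof essentially step for step: first apply Lemma \ref{lem:constistency-degree} to reduce to the face $F_{-n}$ (correctly observing that the lemma, though stated for $F_{-1}$, applies to any face by the same symmetry argument), then apply Lemma \ref{lem:sumreplace} to the restriction, then convert $n\cdot\bdeg(\lreplacen{j}[F_{-n}])$ to $\ideg_{n-1}$ and hence, via the Degree Lemma on $F_{-n}$, to $\bdeg_{n-1}(\lreplacen{j}[F_{-n}])$. The orientation caveat you raise in the last step is a sound sanity check, and your treatment of it (absorb any global sign into the choice of reference simplex, which is harmless mod $n$) is consistent with how the paper handles degrees throughout.
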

\begin{proof}
Lemma \ref{lem:constistency-degree} implies:
$\sum_{\ell\sim L}\bdeg(\ell) = n\cdot \sum_{\ell\sim L}\bdeg(\ell[F_{-n}])$.

Lemma \ref{lem:sumreplace} implies:
$\sum_{\ell\sim L}\bdeg(\ell[F_{-n}]) = 
- \sum_{j=1}^{n-1} \bdeg(\lreplacen{j}[F_{-n}]) \mod{1}$.

Therefore: $\sum_{\ell\sim L}\bdeg(\ell) = - n\cdot \sum_{j=1}^{n-1}
\sum_{\ell\sim L} \bdeg(\lreplacen{j}[F_{-n}]) \mod{n}$.

For every $j \in [n-1]$, 
each subsimplex with $n-1$ different labels in $\lreplacen{j}[F_{-n}]$ contributes $\pm 1/n$ to the boundary degree. 
However, for every $j \in [n-1]$, the only set of $n-1$ distinct labels in $\lreplacen{j}$ is $[n-1]$. Therefore, $\bdeg(\lreplacen{j}[F_{-n}])$ is $1/n$ times the net number of $[n-1]$-labeled subsimplices on $F_{-n}$.
Multiplying by $n$ gives
exactly the net number of 
$[n-1]$-labeled subsimplices, which is the $\ideg_{n-1}(\lreplacen{j}[F_{-n}])$.
By the Degree Lemma 
this equals
$\bdeg_{n-1}(\lreplacen{j}[F_{-n}])$.
\end{proof}

We now prove Conjecture \ref{cnj:perm-deg} for the special case in which Sperner's boundary condition is satisfied for all faces with at most $n-2$ vertices.

\begin{lemma}
\label{lem:sperner2}
Let $L: \vrt(T)\to 2^{[n]}$ be a consistent labeling of a friendly triangulation of $\Delta^{n-1}$.

Suppose that, for every $J\subseteq [n]$ with $|J|\leq n-2$, all vertices on the face $F_J$ are only labeled with labels from $J$.

Then, $L$ induces a single-valued labeling $\ell:\vrt(T)\to [n]$ with:
\begin{align*}
\bdeg(\ell)
\equiv 1~\mod~n.
\end{align*}
\end{lemma}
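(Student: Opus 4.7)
The plan is to reduce the lemma, via Lemma~\ref{lem:sumreplace2}, to an application of Lemma~\ref{lem:sperner1} on the single face $F_{-n}$ after label replacements.

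First I would construct a single-valued labeling $\ell:\vrt(T)\to[n]$ with $\ell \sim L$ that is itself consistent in the sense of Definition~\ref{def:permutation}. For each friendship equivalence class of vertices, let $\hat y = y_{[m]}$ be the canonical representative, whose zero coordinates occupy positions $1,\ldots,m$. When $m\geq 2$, the hypothesis (applied to $J = \operatorname{supp}(\hat y)$, which has size $n-m\leq n-2$) forces $L(\hat y)\subseteq\{m+1,\ldots,n\}$, and every $\perm{k}$ with $k\leq m$ acts as the identity on $\{m+1,\ldots,n\}$. Hence for any choice $s\in L(\hat y)$, the iterated propagation $\ell(x_k)=\perm{k}(\ell(\friend{k}(x_k)))$ along chains of friend-operations reaching $\hat y$ is well-defined (the competing chains all act identically on $s$), and consistency of $L$ places the propagated labels inside the respective $L$-sets. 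The cases $m\leq 1$ cause no issue: interior vertices are unconstrained by consistency, and a vertex with a single zero has a unique friend on $F_{-1}$.

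Next I would apply Lemma~\ref{lem:sumreplace2} to $\ell$ viewed as a consistent singleton-valued labeling, so that the sums over $\ell'\sim\ell$ collapse and
\begin{equation*}
\bdeg(\ell) \equiv -\sum_{j=1}^{n-1} \bdeg_{n-1}(\lreplacen{j}[F_{-n}]) \mod n.
\end{equation*}
The core observation is that every $\lreplacen{j}[F_{-n}]$ for $j\in[n-1]$ satisfies Sperner's boundary condition on $F_{-n}$ viewed as an $(n-2)$-dimensional simplex with label set $[n-1]$: every proper face of $F_{-n}$ is of the form $F_{J'}$ with $J'\subsetneq[n-1]$, so $|J'|\leq n-2$, and by hypothesis $L(x)\subseteq J'$ for every $x\in F_{J'}$; no label equals $n$, so the replacement $n\mapsto j$ leaves the labels in $J'$. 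On $F_{-n}$ itself the replacement puts every label in $[n-1]$. Applying Lemma~\ref{lem:sperner1} in dimension $n-2$ with $n-1$ labels gives $\bdeg_{n-1}(\lreplacen{j}[F_{-n}]) = 1$ for each $j$, so substituting back,
\begin{equation*}
\bdeg(\ell) \equiv -(n-1) \equiv 1 \mod n,
\end{equation*}
as required.

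The main obstacle I expect is the construction in the first paragraph: verifying that the propagation is well-defined at a vertex lying on several boundary faces $F_{-k}$ simultaneously. The hypothesis pays off exactly here by confining every $L(\hat y)$ with $m\geq 2$ to the support of $\hat y$, on which the competing permutations $\perm{k}$ with $k\leq m$ all act identically. After this step, the argument is a straightforward assembly of Lemmas~\ref{lem:sumreplace2} and~\ref{lem:sperner1}.
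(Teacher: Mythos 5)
Your proof is correct and follows the same route as the paper's: reduce $L$ to a single-valued consistent $\ell$, apply Lemma~\ref{lem:sumreplace2}, observe that each $\lreplacen{j}[F_{-n}]$ satisfies Sperner's boundary condition so Lemma~\ref{lem:sperner1} gives $\bdeg_{n-1}(\lreplacen{j}[F_{-n}])=1$, and conclude $\bdeg(\ell)\equiv -(n-1)\equiv 1 \bmod n$. Your first paragraph fills in the well-definedness of the propagation in more detail than the paper (which disposes of it with the terse parenthetical that such vertices are ``not bound by consistency''), but the underlying observation --- that the hypothesis confines labels to $\operatorname{supp}(\hat y)$, where the relevant $\perm{k}$ act trivially --- is the same.
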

\begin{proof}
The lemma's assumption implies that we can simplify $L$ by removing multiple labels while keeping $L$ consistent (every vertex with two or more zero coordinates is labeled with an index of a non-zero coordinate, so it is not bound by consistency). Therefore we assume $L$ is single-valued and let $\ell = L$.

Moreover, the lemma's assumption implies that, on the face $F_{-n}$, each of the labelings $\lreplacen{j}$, for $j<n$, satisfies Sperner's boundary condition (in fact, $n$ does not appear on the boundary of $F_{-n}$, so all these labelings are identical on the boundary). Therefore, by Lemma \ref{lem:sperner1}, its boundary degree is 1:  $\bdeg_{n-1}(\lreplacen{j}[F_{-n}])=1$. 

Applying Lemma \ref{lem:sumreplace2} gives
$
\bdeg(\ell)
\equiv
- (n-1)\cdot 1 
\equiv
1
\mod n
$.
\end{proof}

Lemmas \ref{lem:sumreplace}, \ref{lem:sumreplace2} and \ref{lem:sperner2} are illustrated in Figure \ref{fig:4agents-sperner}.

\begin{remark*}
\emph{In the context of cake-cutting, Sperner's condition means that an agent always prefers a non-empty piece. 
The precondition of Lemma \ref{lem:sperner2} means that an agent prefers a non-empty piece whenever there are two or more empty pieces, but may prefer an empty piece when there is only one such piece. It is hard to relate this requirement to real-world agents. Therefore 
Lemma \ref{lem:sperner2} is interesting  theoretically more than practically.}
\end{remark*}

\begin{figure*}
\begin{center}
\includegraphics[width=.23\textwidth]{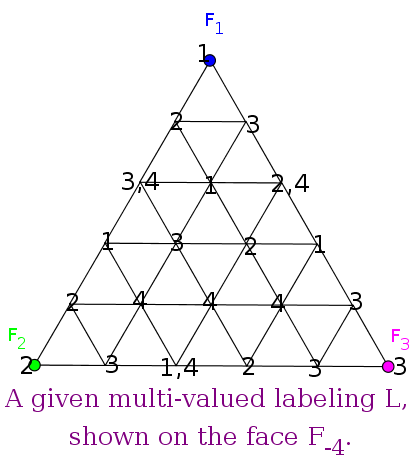}
\includegraphics[width=.21\textwidth]{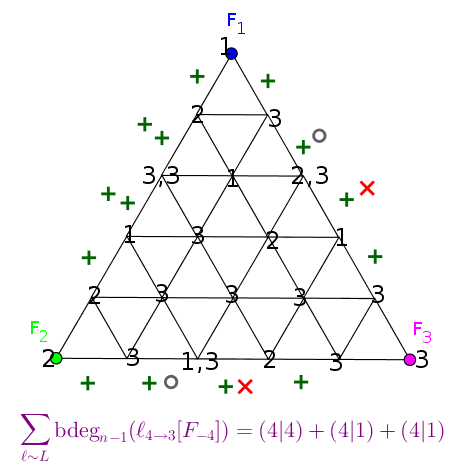}
\includegraphics[width=.21\textwidth]{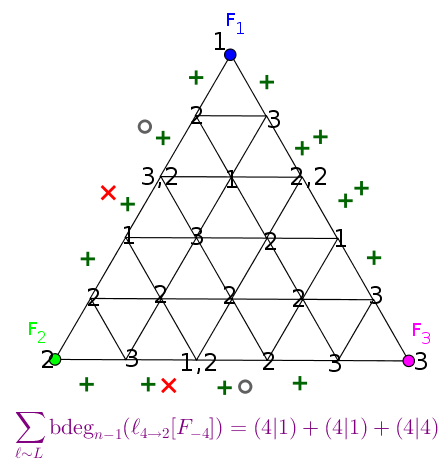}
\includegraphics[width=.21\textwidth]{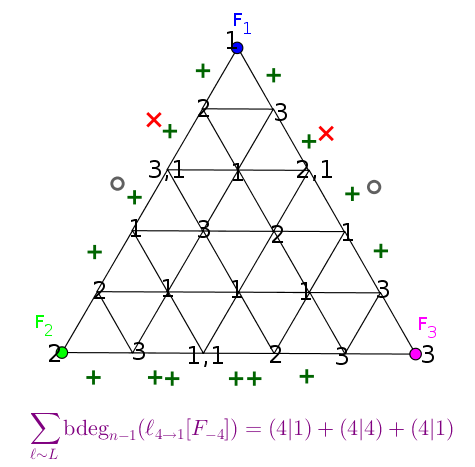}
\includegraphics[width=.08\textwidth]{4agents-reference.png}
\end{center}
\caption{
\label{fig:4agents-halfsperner}
An illustration of
Lemma \ref{lem:Kmodn}.
\\
\textbf{Left}: A consistent labeling $L$ on the face $F_{-4}$ of $\Delta^{4-1}$.
It has 3 negative vertices, labeled by $\{1,4\}$, $\{2,4\}$, $\{3,4\}$.
\\
\textbf{Middle}: 
The three labelings $\Lreplacen{j}$ for $j\in [3]$.
Each edge with a  ``+'' is positively oriented and contributes $+1$ to $\bdeg_{n-1}$; each edge with an ``X'' is negatively oriented and contributes $-1$; each edge with an ``O'' contributes 0. 
Two adjacent symbols correspond to the two options of choosing a label for the multi-labeled negative-vertex. 
On each edge there are two different sequences of labels induced by $L$.
\\
\textbf{Right}:
The reference simplex $Q$.
}
\end{figure*}

The case when $L$  cannot be reduced to a single-valued labeling is more difficult. The following lemma provides a partial treatment.
\begin{lemma}
\label{lem:Kmodn}
Let $L: \vrt(T)\to \multiset{n}$ be a consistent labeling of a friendly triangulation of $\Delta^{n-1}$.

Then, there exists an integer $K$ which is a product of integers from $[n-1]$, such that:
\begin{align*}
\sum_{\ell\sim L}
\bdeg(\ell)
\equiv K~\mod~n.
\end{align*}
\end{lemma}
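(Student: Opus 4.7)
The plan is strong induction on $n$, with the base case $n=3$ being exactly Lemma \ref{lem:permutation-degree}, which yields $\sum\bdeg\equiv 1$ or $2 \pmod 3$ (both trivially products of integers in $[2]$).

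For the induction step, I would first invoke Lemma \ref{lem:sumreplace2} to obtain
\begin{align*}
\sum_{\ell\sim L}\bdeg(\ell) \equiv -\sum_{j=1}^{n-1}A_j \pmod{n},\qquad A_j := \sum_{\ell\sim L}\bdeg_{n-1}(\lreplacen{j}[F_{-n}]).
\end{align*}
The next step is a structural observation: for each $j\in[n-1]$, the multi-valued labeling $\Lreplacen{j}[F_{-n}]$, regarded on the $(n{-}2)$-dimensional simplex $F_{-n}$ identified with $\Delta^{n-2}$ by dropping the identically-zero $n$-th coordinate, is itself consistent in the sense of Definition \ref{def:permutation}. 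The reason is that each consistency permutation $\perm{k}$ with $k\in[n-1]$ fixes the label $n$ (because $n>k$) and therefore commutes with the substitution $n\mapsto j$; moreover, its restriction to $[n-1]$ coincides with the consistency permutation of $\Delta^{n-2}$, and the friend-map $\friend{k}$ preserves the face $F_{-n}$. Applying the inductive hypothesis to each $\Lreplacen{j}[F_{-n}]$ then gives $A_j\equiv K_j\pmod{n-1}$ for some $K_j$ that is a product of integers in $[n-2]$.

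The main obstacle and technical heart of the argument is bridging the modulo-$(n-1)$ information supplied by the inductive hypothesis to the target modulo-$n$ congruence for $-\sum_{j=1}^{n-1}A_j$. The plan is to iterate Lemma \ref{lem:sumreplace2} one dimension deeper within each $A_j$: this contributes an outer factor of $n-1$ (from the sum over the $n-1$ values of $j$) together with an inner integer obtained from the recursive boundary sum on the $(n{-}3)$-dimensional faces of $F_{-n}$. Combining these, and invoking the inductive hypothesis at the lower level, should yield an expression of the form $\sum_\ell\bdeg(\ell)\equiv (n-1)\cdot K'\pmod{n}$ with $K'$ a product of integers in $[n-2]$, so that $K=(n-1)\cdot K'$ is the desired product of integers in $[n-1]$. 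The delicate part is verifying that the terms $A_j$ do combine modulo $n$ (and not merely modulo $n-1$) into a multiple of $n-1$ plus a correction of the required product type; I expect that a combinatorial accounting of how the label-$n$ substitution interacts with the parities of the permutations $\perm{k}$ along the $(n{-}3)$-dimensional boundary, paralleling the positive/negative case split used in Lemma \ref{lem:permutation-degree}, is precisely what is required to close the argument.
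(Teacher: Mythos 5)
The proposal does not succeed, and the place where it fails is exactly the place you flag as ``the delicate part'': the modular mismatch. The inductive hypothesis you want to invoke for $\Lreplacen{j}[F_{-n}]$ on $F_{-n}\cong\Delta^{n-2}$ produces a congruence modulo $n-1$, not modulo $n$. Since $\gcd(n-1,n)=1$, a statement about $A_j$ modulo $n-1$ carries essentially no information about $A_j$ modulo $n$; no amount of ``iterating one dimension deeper'' will convert one into the other, because at every stage the modulus you have in hand is coprime to the modulus you need. The suggestion that iterating gives an outer factor of $n-1$ plus ``a correction of the required product type'' is a hope, not an argument, and this is the critical step that would need to be proved but cannot be, at least not by this route.

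The paper's proof is not an induction on $n$ at all. After the same application of Lemma \ref{lem:sumreplace2}, it decomposes the boundary of $F_{-n}$ into its $n-1$ faces $F_{-n,z}$ and looks at what the substitution $n\mapsto j$ does to the negative vertex of each such face (whose label set is $\{n,z\}$). Exactly $n$ of the resulting single-valued labelings --- the one counted twice on $F_{-n,j}$, plus the one on each $F_{-n,z}$ ($z\neq j$) that picks the zero-coordinate label $z$ --- all have the same boundary degree $d_0$ by consistency, so together they contribute $n\cdot d_0\equiv 0\pmod n$ and vanish. The remaining $n-2$ labelings (those that pick the nonzero label $j$) satisfy Sperner's boundary condition, so by Lemma \ref{lem:sperner1} each contributes degree $1$, multiplied by the number of choices on the other multi-labeled vertices, giving $n-2$ times a product of integers in $[n-1]$. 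Summing over $j\in[n-1]$ gives $(n-1)(n-2)$ times such a product, which is the desired $K$. The key mechanism is that the ``repeated'' terms cancel modulo $n$, not that any sub-problem gets solved modulo a smaller number. A secondary issue with your proposal is the unproved claim that $\Lreplacen{j}[F_{-n}]$ is consistent as a labeling of $\Delta^{n-2}$: this requires checking how the friend maps $\friend{k}$ for $k\in[n-1]$ behave on the subface, and in any case the paper sidesteps this by applying Lemma \ref{lem:constistency-degree} (which already encodes the needed consistency) and Lemma \ref{lem:sperner1} directly rather than recursing.
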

\begin{proof}
As in previous proofs, we remove from $L$ as many multiple labels as possible while keeping $L$ consistent. 
Now, $L$ assigns multiple labels only to
\emph{negative vertices} --- vertices 
$x\in F_{[n]\setminus J}$ for which $L(x)=J$ --- when $|J|\geq 2$ (these vertices represent partitions where there are two or more empty pieces and the agent wants an empty piece).

By Lemma \ref{lem:sumreplace2}, the left-hand side equals $
 - 
 \sum_{j=1}^{n-1}
 \sum_{\ell\sim \Lreplacen{j}}
\bdeg_{n-1}(\ell [F_{-n}]) \mod{n}
$. We now calculate this sum. We focus on a specific $j$ and calculate
\begin{align*}
\sum_{\ell\sim \Lreplacen{j}}  \bdeg_{n-1}(\ell[F_{-n}]) \mod{n}.
\end{align*}

The boundary of the face $F_{-n}$ consists of $n-1$ faces. Let's denote them by $F_{-n,z}$ for $z\in[n-1]$. So:
\begin{align}
\label{eq:sumsum}
\sum_{\ell\sim \Lreplacen{j}} 
\bdeg_{n-1}(\ell[F_{-n}]) = 
\sum_{\ell\sim \Lreplacen{j}} 
\sum_{z\in [n-1]} \bdeg_{n-1}(\ell[F_{-n,z}])
\end{align}
Each face $F_{-n,z}$ has $n-2$ vertices and two zero coordinates --- $n$ and $z$ --- so negative vertices on that face are labeled with $\{n,z\}$.
When we replace $n$ by $j$, in each face $F_{-n,z}$, negative vertices are labeled with $\{j,z\}$.
Particularly, in $F_{-n,j}$, negative vertices are labeled twice with $j$, 
so the sum \eqref{eq:sumsum} counts the same labeling twice.
Denote the boundary-degree of that labeling on $F_{n,j}$ by $d_0$.

In the other $n-2$ faces $F_{-n,z}$ where $z\neq j$, the sum \eqref{eq:sumsum} counts two different labelings: 
\begin{itemize}
\item 
One labeling in which the negative vertex is labeled with $z$, which corresponds to a zero-coordinate of the face. By consistency and Lemma \ref{lem:constistency-degree}, its degree equals $d_0$ --- it is the same labeling up to a consistent permutation of the labels.
\item 
A second labeling in which the negative vertex is labeled with $j$, which 
corresponds to a non-zero coordinate of the face.
This labeling satisfies Sperner's condition. By Lemma \ref{lem:sperner1}, its degree equals $1$.
\end{itemize}
All in all, the sum \eqref{eq:sumsum} counts $n$ times $d_0$ plus $n-2$ times $1$.
The first term vanishes modulu $n$.
The second term should be multiplied by the number of different labels on each multi-labeled vertex outside that face. 
Each such vertex is labeled with at most $n-1$ different labels. 
All in all, the sum \eqref{eq:sumsum} is equal (modulu $n$) to $n-2$ times a product of integers in $[n-1]$.

By consistency, the sum \eqref{eq:sumsum} is the same for each $j\in[n-1]$. All in all, the sum $ \sum_{j=1}^{n-1}
\sum_{\ell\sim \Lreplacen{j}}
\bdeg_{n-1}(\ell[F_{-n}]) \mod{n}$ equals  $n-1$ times $n-2$ times a product of integers in $[n-1]$.
\end{proof}

An example for $n=4$ is illustrated in Figure \ref{fig:4agents-halfsperner}.
$F_{-4}$ has three edges. Each of these edges has a negative vertex with two possible labels.
In each of the three labelings $L_{4\to 3}$, $L_{4\to 2}$, $L_{4\to 1}$, 
each of the three edges has two possible labelings. Of the six labelings, four are identical (the labelings + + + +) so they contribute 0 to the sum of degrees modulu 4.
The other two labelings satisfy Sperner's condition (the labelings + O X +) so their degree is 1. Together they contribute 2 to the sum, but it should be multiplied by the number of options to choose labels on the other edges. All in all,
for each $j\in [3]$, the sum $\sum_{\ell\sim L_{4\to j}}\bdeg_{n-1}(\ell)$ equals $2$ times the number of options to choose labels on the other edges, which in this case is 4. 
Unfortunately, this makes the $\sum_{\ell\sim L_{4\to j}}\bdeg_{n-1}(\ell)$ equal zero modulu 4, so it does not help us prove Conjecture \ref{cnj:perm-deg}. 

Lemma \ref{lem:Kmodn} does imply Conjecture \ref{cnj:perm-deg} when $n$ is prime.
\begin{lemma}
\label{lem:prime}
Let $L: \vrt(T)\to 2^{[n]}$ be a consistent labeling of a friendly triangulation of $\Delta^{n-1}$, for some prime integer $n$.

Then, $L$ induces a single-valued labeling $\ell:\vrt(T)\to [n]$ with:
\begin{align*}
\bdeg(\ell)
\not\equiv 0 \mod{n}.
\end{align*}
\end{lemma}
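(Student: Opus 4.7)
The plan is to derive Lemma \ref{lem:prime} as an almost immediate corollary of Lemma \ref{lem:Kmodn}, exploiting the fact that primality of $n$ guarantees that no integer in $[n-1]$ shares a factor with $n$.

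First I would invoke Lemma \ref{lem:Kmodn} to obtain
\begin{align*}
\sum_{\ell\sim L}\bdeg(\ell) \equiv K \pmod{n},
\end{align*}
where $K$ is a product $K = a_1\cdot a_2 \cdots a_m$ with each $a_r \in [n-1] = \{1,2,\ldots,n-1\}$. Since $n$ is prime, the only multiples of $n$ in $\mathbb{Z}_{>0}$ are $n, 2n, \ldots$; in particular, none of the factors $a_r$ can be divisible by $n$. Because $n$ is prime, $\mathbb{Z}/n\mathbb{Z}$ is a field, so the product of nonzero residues is itself nonzero. Therefore $K \not\equiv 0 \pmod{n}$, and hence
\begin{align*}
\sum_{\ell\sim L}\bdeg(\ell) \not\equiv 0 \pmod{n}.
\end{align*}

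From here the argument concludes by a simple pigeonhole/averaging observation: a sum of integers that is nonzero modulo $n$ must contain at least one summand that is nonzero modulo $n$. Since each boundary degree $\bdeg(\ell)$ is an integer (it is an integer for single-valued labelings, as every term in the definition is $\pm 1/n$ or $0$ and, summed across all $n$ faces, produces an integer by the preceding lemmas applied to $L$'s restriction -- or, equivalently, because the summand is congruent modulo $1$ to an integer once multiplicities are accounted for), there must exist at least one labeling $\ell \sim L$ with $\bdeg(\ell) \not\equiv 0 \pmod{n}$, which is the desired conclusion.

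The main ``work'' has already been done in Lemma \ref{lem:Kmodn}; the only non-obstacle here is the clean number-theoretic fact that primality of $n$ forbids a product of elements in $[n-1]$ from being divisible by $n$. There is no genuine hard step for the prime case once Lemma \ref{lem:Kmodn} is in hand -- the difficulty lives entirely in that earlier lemma, whereas here the primality hypothesis is precisely what blocks the cancellation obstruction illustrated for $n=4$ in Figure \ref{fig:4agents-halfsperner} (where $K = (n-1)(n-2)\cdots$ happens to be a multiple of $4$).
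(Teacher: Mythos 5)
Your proof is correct and takes essentially the same route as the paper's: invoke Lemma \ref{lem:Kmodn}, note that primality of $n$ makes a product of elements of $[n-1]$ invertible mod $n$, and finish with the observation that a sum nonzero mod $n$ must have a summand nonzero mod $n$. The only difference is that you spell out the integrality of $\bdeg(\ell)$ and the final pigeonhole step, which the paper leaves implicit.
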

\begin{proof}
The unique feature of a prime number $n$ is that it is not a multiple of integers from $[n-1]$. Therefore, Lemma \ref{lem:Kmodn} implies:
\begin{align*}
\sum_{\ell\sim L}
\bdeg(\ell)
\not\equiv 0 \mod{n}.
\end{align*}
Therefore, there exists at least one $\ell\sim L$ such that $\bdeg(\ell)
\not\equiv 0~\mod~n$.
\end{proof}

Our proof technique does not work when $n$ is not prime. Figure \ref{fig:4agents-halfsperner} shows that it fails even when $n=4$. 
This does not mean that the conjecture is false. For the conjecture, it is not necessary that the sum be nonzero --- it is sufficient that a single term in the sum be nonzero. There may be smarter ways of proving this than just taking a sum of all possible induced labelings. 
In fact, \citet{meunier2018envy} proved Conjecture \ref{cnj:perm-deg} for $n=4$ under an additional condition on the triangulation. The other cases are still open.
\fi

\section{Acknowledgments}
This work began when I was visiting the economics department in Glasgow university. I am grateful to Herve Moulin for the hospitality, guidance and inspiration.
Part of this work was done during my Ph.D. studies in Bar-Ilan university \citep{SegalHalevi2017Phd}.

The presentation and content of this article benefited a lot from comments by Oleg R. Musin, Peter Landweber, Ron Adin, participants in Bar-Ilan University combinatorics seminar and the St. Petersburg Fair Division conference, and three anonymous AAMAS reviewers.
I am grateful to Tahl Nowik, Francis E. Su, Frederic Meunier, Douglas Zare, Allen Hatcher, Kevin Walker, Francisco Santos and Lee Mosher for the mathematical tips.

\ifdefined\FULLVERSION
\newpage
\appendix
\section{Known Algorithms for Connected Envy-Freeness Do Not Work with Mixed Valuations}
\label{app:connected}
When there are at least 3 agents, connected envy-free cake-cutting cannot be found by a finite discrete procedure \citep{Stromquist2008Envyfree}. For 3 agents, several non-discrete procedures are known. These procedures use \emph{moving knives}.

\subsection{Rotating-knife procedure}
This beautiful procedure of \citet[pages 77-78]{Robertson1998CakeCutting} can be used only when the cake has at least two dimensions --- it cannot be used when the cake is a one-dimensional interval. However, this author has a special fondness for two-dimensional cakes \citep{SegalHalevi2017Fair,SegalHalevi2015EnvyFree} so he does consider this a pro rather than a con.

For simplicity assume that the cake is a convex 2-dimensional object, though the procedure can be extended to more general geometric settings. When all value-densities are positive, the procedure works as follows. 

Initially each agent marks a line parallel to the $y$ axis, such that the cake to the left of its line equals exactly $1/3$ by this agent's valuation. The leftmost mark is selected; suppose this mark belongs to Alice. Alice receives the piece to the left of her mark, and the remainder has to be divided among Bob and Carl.

Alice places a knife that divides the remainder into two pieces equal in her eyes. She rotates the knife slowly such that the two pieces at the two sides of the knife remain equal (this is possible to do for every angle). By the intermediate value theorem, there exists an angle such that Bob thinks that the two pieces at the two sides of the knife are equal too. At this point, Bob shouts ``stop'', the cake is cut, Carl picks the piece he prefers and Bob receives the last remaining piece.

For Alice, all three pieces have the same value, so she does not envy anyone; this is true even with mixed valuations. For Bob and Carl, the division of the remainder is like cut-and-choose so they do not envy each other; this too is true even with mixed valuations. When the valuations are positive, both Bob and Carl do not envy Alice, since her piece is contained in their leftmost $1/3$ pieces so it is worth for them less than $1/3$. However, this claim is true only when their value-densities are positive.

The procedure can be adapted to the case in which all value-densities are weakly-negative: in the first step, the \emph{rightmost} mark is selected instead of the leftmost one. However, with mixed valuations this adaptation does not work either. For example, suppose that the cake is piecewise-homogeneous with 4 homogeneous parts, and the agents' values to these parts are:

\begin{center}
\begin{tabular}{|c|c|c|c|c|}
\hline 
Alice: & -1 & 2 & 2 & -6 \\ 
\hline 
Bob: & 1 & -2 & 2 & -4 \\ 
\hline 
Carl: & 3  & -2 & -2 & -2 \\ 
\hline 
\end{tabular} 
\end{center}
For all agents, the entire cake is worth $-3$, so in the first step, each agent marks a line such that the cake to its left is worth $-1$. Thus Alice's mark is after the first slice to the left, Bob's mark is after the second slice and Carl's mark is after the third slice. Then:
\begin{itemize}
\item 
If Alice receives the piece to the left of her mark, then Bob might envy her even if he gets a half of the remainder;
\item If Bob receives the piece to the left of his mark, then Carl might envy him even if he gets a half of the remainder;
\item If Carl receives the piece to the left of his mark, then Alice might envy him even if she gets a half of the remainder.
\end{itemize}
So the procedure cannot be adapted, at least not in a straightforward way.

\subsection{Two-moving-knives procedure}
This procedure of \citet{Barbanel2004Cake} works also for a cake of one or more dimensions (to guarantee that the pieces are connected, it should be assumed that the cake is convex; all knives and cuts are parallel).

The first step is the same as in the rotating-knife procedure: the agents mark their $1/3$ line and the leftmost mark is selected; suppose this mark belongs to Alice. In the second step, Alice divides the remainder into two pieces equal in her eyes. Then there are three cases:
\begin{itemize}
\item If Bob prefers the middle piece and Carl the right piece or vice versa, then each of them gets his preferred piece and Alice gets the leftmost piece.
\item If both Bob and Carl prefer the middle piece, then Alice holds two knives at the two ends of the middle piece and moves them inwards, keeping the two external pieces equal in her eyes. When either Bob or Carl believes that the middle piece is equal to one of the external pieces, he shouts ``stop'' and takes that external piece. The non-shouter takes the middle piece and Alice takes the other external piece.
\item If both Bob and Carl prefer the rightmost piece, then Alice holds two knives at the two ends of the middle piece and moves them \emph{rightwards}, keeping the two \emph{leftmost} pieces equal in her eyes; then the procedure proceeds as in the previous case.
\end{itemize}
When all valuations are positive, these are the only possible cases, since both Bob and Carl believe that Alice's piece is worth at most $1/3$. The procedure can be adapted to the case of all-negative valuations, by putting the two knives in the hand of the \emph{rightmost} cutter. However, this adaptation does not work with mixed valuations, as shown by the example in the previous subsection.

\subsection{Four-moving-knives procedure}
This procedure of \citet{Stromquist1980How} was the first procedure for connected envy-free division. It requires a ``sword'' moved by a referee, and three knives moved simultaneously by the three agents. It works for a convex cake in one or more dimensions; again all knives and cuts are parallel.

The sword moves constantly from the left end of the cake to its right end. Each agent holds his knife in a point that divides the cake into the right of the sword to two pieces equal in his eyes. The first agent that thinks that the leftmost piece is sufficiently valuable (equal to the piece at the left/right of the middle knife) shouts ``stop'' and receives the leftmost piece. Then, the middle knife cuts the remainder and each of the non-shouters gets a piece that contains its knife.

The correctness of this procedure depends on the assumption that the non-shouters will not envy the shouter (since otherwise they should have shouted earlier). However, this is true only if the piece to the left of the sword grows monotonically as the sword moves rightwards. When the valuations are mixed, the monotonicity breaks down, and with it, the no-envy guarantee.

\subsection{Approximation algorithms}
For additive agents, \citet{Branzei2017Query} present a general procedure for finding an $\epsilon$-approximation for \emph{any} condition described by linear constraints. 
Whenever there exists an allocation that satisfies such a condition, their algorithm finds an allocation in which the value of each agent is at most $\epsilon$ less than its required value.  
In particular, whenever an envy-free allocation exists, 
their algorithm finds an allocation in which each agent values its piece as at most $\epsilon$ less than the piece of any other agent (the agents' valuations are normalized such that the entire cake-value is $1$ for all agents, so $\epsilon$ is a fraction, e.g., $0.01$ of the entire cake value). Their algorithm works as follows:
\begin{enumerate}
\item Each agent makes several marks on the cake, such that its value for the piece between each two consecutive marks is at most $\epsilon$.
\item The algorithm checks all combinations of $n-1$ marks; each such combination defines a connected division. If an envy-free division exists, then necessarily one of the checked divisions represents an $\epsilon$-envy-free division.
\end{enumerate}
This algorithm works well for mixed cakes. In particular, for $n=3$, an envy-free allocation exists, so an $\epsilon$-envy-free allocation will be found by the above procedure.

However, there is a ``catch''. When the cake is good, the number of queries required is $O(n/\epsilon)$, since each agent has to make $O(1/\epsilon)$ marks.
This is also true when the cake is bad; in this case, the values between each two consecutive marks will be $-\epsilon$. However, when the cake is mixed, the number of marks might be arbitrarily large: each agent might have an unbounded number of $+\epsilon$ and $-\epsilon$ pieces. 

For the case $n=3$, \citet{Branzei2017Query} present an improved approximation algorithm that finds an $\epsilon$-envy-free allocation in $O(\log(1/\epsilon))$ queries. However, this algorithm approximates the Barbanel-Brams two-knives procedure, which does not work with mixed cakes (see above).

Therefore, the query complexity of finding an $\epsilon$-envy-free allocation in a mixed cake remains an open question.

\section{Envy-freeness with disconnected pieces}
\label{app:disconnected}
Without the connectivity requirement,
more options for envy-free division are available.

\subsection{Exactly-equal and nearly-exactly-equal divisions}
Suppose the value-densities of the agents are normalized such that each agent values the entire cake as 1. Then, it is possible to divide the cake into pieces each of which is worth \emph{exactly} $1/n$ for every agent. Such a division is envy-free whether the valuations are positive, negative or mixed.
The existence of such partitions was proved by \citet{Dubins1961How}; later, \citet{Alon1987Splitting} proved it can be done with a bounded number of cuts. However, this number is still much larger than $n-1$, so the pieces will not necessarily be connected. In fact, \citet{Alon1987Splitting} showed a simple example in which it is impossible to have an exactly-equal division with connected pieces, even with positive valuations, let alone mixed valuations.

Even without connectivity, it is impossible to find an exactly-equal division with a finite number of queries.
The algorithm of \citet{Robertson1998CakeCutting} uses a finite number of queries to find a nearly-exactly-equal division, which is also envy-free. At first glance, it seems this algorithm should work for mixed valuations too, but the details require more work.

\subsection{Trimming and enlarging}
The first algorithm for envy-free division for three agents was devised by Selfridge and Conway \citep[pages 116-120]{Brams1996Fair}. It introduced the idea of \emph{trimming}. Let Alice cut the cake into three pieces equal in her eyes. Then ask Bob and Carl which piece they prefer. If they prefer different pieces then we are done. If the prefer the same piece, then let Bob trim this best piece so that it's equal to his second-best piece. Now, Carl takes any piece he wants, Bob takes one of his two best pieces (at least one of these remains on the table), and Alice takes one of her three original pieces (at least one of these remains on the table). We have an envy-free division of a part of the cake; the trimmings remain on the table and are divided by a second step, which we skip here for brevity.

The idea of trimming a best piece to make it equal to the second-best piece lies at the heart of more sophisticated algorithms for $n$ agents, such as \citet{Brams1995EnvyFree} and \citet{Aziz2016Discrete}. This idea crucially relies on all valuations being  positive, so that trimming a piece makes it weakly less valuable for all agents. 

When all valuations are negative, the analogue of trimming is enlarging --- the smallest piece should be enlarged to make it equal to the second-smallest; however, it is not immediately clear how this enlargement can be done --- where should the extra cake come from? The first solution was devised by Reza Oskui \citep[pages 73-75]{Robertson1998CakeCutting} for three agents. 
The idea of enlarging pieces was further developed by \citet{Peterson2009Nperson}, who presented an algorithm for $n$ agents. Their algorithm is discrete and requires a finite, but unbounded, number of queries. 

When valuations are mixed, trimming or enlarging a piece can make it better for some players and worse for some other players. Therefore, it is not clear how any of these procedures can be adapted.

\subsection{Dividing positive and negative parts separately}
There is another simple trick that can be used when there is no connectivity requirement. The idea is to divide the cake into sub-cakes of two types:
\begin{enumerate}
\item Sub-cakes whose value is positive for at least one agent;
\item Sub-cakes whose value is negative for all agents.
\end{enumerate}
Sub-cakes of the first kind should be divided among the agents who value them positively, using any algorithm for envy-free division with positive valuations; sub-cakes of the second kind should be divided among all $n$ agents, using any algorithm for envy-free division with negative valuations.

This algorithm can be done in finite time if and only if, for each agent, the cake can be divided to a finite number of pieces, each of which is entirely-positive or entirely-negative (in other words, the number of switches between positive and negative value-density is finite for every agent). 

Even with this condition, the algorithm does not fit the standard Robertson-Webb query model. This model allows to ask an agent to mark a piece of cake having a certain value, but there is no query of the form ``mark the cake at a point where your value switches between negative and positive''. 

Still, in practice this algorithm seems like the most reasonable alternative: it does not make sense to give a cake to an agent who thinks it is bad, when other agents think it is good.

\fi
\bibliographystyle{ACM-Reference-Format}  
\balance
\bibliography{../erelsegal-halevi}
\end{document}